\newcommand\picskip{\smallskip}
\newcommand{\setcondsbig}[2]{\ensuremath{\big\{#1\colon #2 \big\}}}
\newcommand{\quotstack}[2]{\ensuremath{\big[#1\big/\,#2\big]}}
\newcommand{\quotstackBig}[2]{\ensuremath{\big[#1\big/#2\big]}}
\newlength\myWindowLength
\newcommand{\WindowGens}[3]{#1\colon l\in \left[0,#2\right),\, m\in \left[#3\right)}
\newcommand{\WindowBox}[3]{\settowidth{\myWindowLength}{$\WindowGens{#1}{#2}{#3}$}
\begin{minipage}[c][15pt][c]{\myWindowLength}{$\WindowGens{#1}{#2}{#3}$}\end{minipage}}
\newcommand{\WindowSet}[3]{\big\{\WindowBox{#1}{#2}{#3} \big\}}
\newcommand{\WindowGensNew}[3]{#1\colon l\in \left[0,#2\right],\, m\in \left(#3\right]}
\newcommand{\WindowBoxNew}[3]{\settowidth{\myWindowLength}{$\WindowGensNew{#1}{#2}{#3}$}
\begin{minipage}[c][15pt][c]{\myWindowLength}{$\WindowGensNew{#1}{#2}{#3}$}\end{minipage}}
\newcommand{\WindowSetNew}[3]{\big\{ \WindowBoxNew{#1}{#2}{#3} \big\}}
\newcommand\xyhook{\ar@{^{(}->}}
\newcommand\C{\mathbb C}
\newcommand\id{\operatorname{id}}
\newcommand\into{\hookrightarrow}
\DeclareMathOperator \Hom {Hom}
\renewcommand\hom{\mathcal{H}{\rm om} }
\DeclareMathOperator\Ext{Ext}
\newcommand\RDerived{\mathrm{R}}
\DeclareMathOperator \RHom {RHom}
\DeclareMathOperator \RsHom {R\mathcal{H}om}
\renewcommand\Im{\operatorname{Im}}
\renewcommand\P{\mathbb P}
\newcommand{\Tw}{\operatorname{Tw}}
\newcommand{\Tr}{\operatorname{Tr}}
\newcommand{\Gr}{\mathrm{G}}
\newcommand{\Pf}{\mathrm{P}}
\newcommand\cL{\mathcal L}
\newcommand\cM{\mathcal M}
\newcommand\rk{\operatorname{rk}}
\newcommand{\cA}{\mathcal{A}}
\newcommand{\cB}{\mathcal{B}}
\newcommand{\cE }{\mathcal{E}}
\newcommand{\cF}{\mathcal{F}}
\newcommand{\cO}{\mathcal{O}}
\newcommand{\cG}{\mathcal{G}}
\newcommand{\cW}{\mathbb{W}}
\newcommand{\mfG}{\mathcal{G}}
\newcommand{\mfP}{\mathcal{P}}
\newcommand{\mfX}{\mathfrak X}
\newcommand{\Sym}{\operatorname{Sym}}
\newcommand{\Serre}{\mathbb{S}}
\DeclareMathOperator \SL {SL}
\DeclareMathOperator \GL {GL}
\newcommand \Caldararu {C\u{a}ld\u{a}\-raru}
\newlength\tempWidth
\newcommand\InSpaceOf[2]{
 \settowidth{\tempWidth}{$#1$}\phantom{#1}\hspace{-\tempWidth}{#2}}
\newcommand\Br{\mathrm{Br}}
\newcommand\Cone{\mathrm{Cone}}
\newcommand\D{\mathrm{D^b}}
\newcommand\twoform[1]{\omega_{#1}}
\newcommand\smallGLSrep[2]{
\def\exp{#1} \def\zero{0}
\def\twist{#2} \def\zero{0}
\smallSymS{#1} \ifx\twist\zero \phantom{(} \else {({#2})}\fi
}
\newcommand\smallGLSrepnew[2]{
\def\exp{#1} \def\zero{0}
\def\twist{#2} \def\zero{0}
\smallSymSnew{#1} \ifx\twist\zero \phantom{(} \else {({#2})} \fi
}
\newcommand\smallGLSrepnewabbr[2]{
S_{#1,#2}
}
\newcommand\smallSymS[1]{
\def\exp{#1} \def\zero{0} \def\one{1}
\ifx\exp\zero \cO \else \ifx\exp\one S^{\vee} \else {S}^{{#1} \vee} \fi \fi
}
\newcommand\smallSymSnew[1]{
\def\exp{#1} \def\zero{0} \def\one{1}
\ifx\exp\zero \cO \else \ifx\exp\one S \else S^{{#1}} \fi \fi
}
\newcounter{keyenvcount}
\numberwithin{equation}{section}
\newtheorem{thm}{Theorem}[section]
\newtheorem{prop}[thm]{Proposition}
\newtheorem{lem}[thm]{Lemma}
\newtheorem{cor}[thm]{Corollary}
\newtheorem{col}{Collection}
\theoremstyle{definition}
\newtheorem{defn}[thm]{Definition}
\newtheorem{notn}[thm]{Notation}
\newtheorem{eg}[thm]{Example}
\newtheorem{rem}[thm]{Remark}
\newcommand{\marginparstretch}{0.6}
\let\oldmarginpar\marginpar
\renewcommand\marginpar[1]{\-\oldmarginpar[\framebox{\setstretch{\marginparstretch}\begin{minipage}{\marginparwidth}{\raggedleft\scriptsize #1}\end{minipage}}]{\framebox{\setstretch{\marginparstretch}\begin{minipage}{\marginparwidth}{\raggedright\scriptsize #1}\end{minipage}}}}
\newcommand\segment[3]{to [out=#1,in=#3+180] (#2)}
\newcommand\circsegment[3]{arc (#1+90:#3+90:#2 and #2)}
\def\colorlinebundletop{green!60!black}
\def\colorlinebundlebottom{red}
\def\colortwist{black}
\def\colorfront{black!20}
\def\colorback{black!10}
\def\monodromystyle{semithick}
\newcommand\skmsPic[2]{
\def\type{#1}
\def\labels{#2}
\def\typeOne{1}
\def\typeTwo{2}
\def\labelsMod{1}

\def\vertspherescale{0.27}
\def\spherescale{1.15 }
\begin{tikzpicture}[>=stealth,scale=2.5]

\node (Mlabel) at (135:1.25*\spherescale) {$ \cM$};
\draw[thick] ([shift=(-84:\spherescale)]0,0) arc (-84:84:\spherescale)
 [bend left] to (96:\spherescale)
 arc (96:264:\spherescale)
 [bend left] to cycle;
\draw[\colorback] (\spherescale,0) arc (0:180:\spherescale and \vertspherescale);
\draw[\colorfront] (\spherescale,0) arc (0:-180:\spherescale and \vertspherescale)
coordinate[pos=0.60,above=0.35] (C) coordinate[pos=0.47,above=0.0] (D) coordinate[pos=0.35,above=0.35] (E);
\filldraw[fill=white,draw=black] (C) circle (1.5pt);
\filldraw[fill=white,draw=black] (D) circle (1.5pt);
\filldraw[fill=white,draw=black] (E) circle (1.5pt);
\ifx\type\typeOne
\node (Clabel) at (C) [below=0.25] {\ifx\labels\labelsMod $\scriptstyle \Tw^2 $ \else $\scriptstyle \operatorname{Sym}^2 \!S$ \fi};
\node (Dlabel) at (D) [below=0.25] {\ifx\labels\labelsMod $\scriptstyle \Tw^1 $ \else $\scriptstyle S$ \fi};
\node (Elabel) at (E) [below=0.25] {\ifx\labels\labelsMod $\scriptstyle \Tw^0 $ \else $\scriptstyle \mathcal{O}$\fi}; \fi
\node (p) at (-0.4*\spherescale,0.6*\spherescale) {};
\node (plabel) at (p) [above left=-0.05] {$\scriptstyle m_{\Gr}$};

\ifx\type\typeTwo
\node (q) at (-0.35*\spherescale,-0.7*\spherescale) {};
\node (qlabel) at (q) [below left=-0.08] {$\scriptstyle m_{\Pf}$};
\fi
\node[color=\colorlinebundletop] at (0.25*\spherescale,0.78*\spherescale) {$\scriptstyle \otimes\mathcal{O}_{Y_\Gr}\!(1)$};
\draw[\monodromystyle,,color=\colorlinebundletop,bend left,looseness=0.5]
(p) to ([shift=(100:\spherescale)]0,0);
\draw[\monodromystyle,color=\colorlinebundletop,bend right,densely dotted] ([shift=(100:\spherescale)]0,0) to ([shift=(80:\spherescale)]0,0);
\draw[\monodromystyle,color=\colorlinebundletop, looseness=1.0,decoration={
markings,
mark=at position 0.5 with {\arrow{>}}},postaction=decorate]
([shift=(79:\spherescale)]0,0)
\segment{-130}{[shift=(90:0.85*\spherescale)]0,0}{-160}
\segment{-160}{p}{-115};

\ifx\type\typeOne
\draw[\monodromystyle,color=\colortwist, looseness=0.8, decoration={markings,
 mark=at position 0.54 with {\arrow[rotate=15]{>}}}, postaction=decorate] ($(p)+(100:0.2pt)$)
 \segment{10}{$(E)+(45:3.5pt)$}{-45}
 \circsegment{-45}{3.5pt}{-260}
 \segment{-260}{$(p)+(100:-0.2pt)$}{190};

\draw[\monodromystyle, color=\colortwist, looseness=0.8, decoration={markings,
 mark=at position 0.54 with {\arrow[rotate=15]{>}}}, postaction=decorate] ($(p)+(80:0.2pt)$)
 \segment{-10}{$(D)+(25:3.5pt)$}{-65}
 \circsegment{-65}{3.5pt}{-285}
 \segment{-285}{$(p)+(80:-0.2pt)$}{170};

\draw[\monodromystyle, color=\colortwist, looseness=0.8, decoration={markings,
 mark=at position 0.54 with {\arrow[rotate=15]{>}}}, postaction=decorate] ($(p)+(0:0.2pt)$)
 \segment{-90}{$(C)+(5:3.5pt)$}{-85}
 \circsegment{-85}{3.5pt}{-305}
 \segment{-305}{$(p)+(0:-0.2pt)$}{90};
\fi

\ifx\type\typeTwo

\draw[\monodromystyle,looseness=0.5,decoration={
 markings,
 mark=at position 0.8 with {\arrow[rotate=0]{>}}},postaction=decorate] (p) \segment{-130}{q}{-40};
\draw[\monodromystyle,looseness=0.7,decoration={
 markings,
 mark=at position 0.8 with {\arrow[rotate=0]{>}}},postaction=decorate] (p) \segment{-30}{q}{-150};
\draw[\monodromystyle,looseness=1.5,decoration={
 markings,
 mark=at position 0.75 with {\arrow[rotate=0]{>}}},postaction=decorate] (p) \segment{-10}{q}{-170};
\draw[\monodromystyle,looseness=2.5,decoration={
 markings,
 mark=at position 0.7 with {\arrow[rotate=0]{>}}},postaction=decorate] (p) \segment{0}{q}{-180};
\node (path0) at (0.7,-0.55) {$\scriptstyle \Psi^0$};
\node (path1) at (0.32,-0.48) {$\scriptstyle \Psi^1$};
\node (path2) at (-0.29,-0.45) {$\scriptstyle \Psi^2$};
\node (path3) at (-0.75,-0.47) {$\scriptstyle \Psi^3$};
\fi

\ifx\type\typeOne
\def\circRadius{26pt}
\def\circRadiusShift{26pt}
\coordinate (circCentre) at ($(p)+(-50:\circRadius)$);
\coordinate (circCentreShift) at ($(p)+(-56:\circRadius)$);
\node[color=\colorlinebundlebottom] at (0.23*\spherescale,-0.65*\spherescale) {$\scriptstyle \Psi^{ -1}(\otimes\mathcal{O}_{Y_\Pf}\!(1))\Psi$};
\draw[\monodromystyle,color=\colorlinebundlebottom, looseness=1.2,decoration={
 markings,
 mark=at position 0.8 with {\arrow[rotate=0]{>}}},postaction=decorate]
($(p)+(-55:0.2pt)$)
\segment{-145}{$(circCentre)+(150:\circRadius)$}{-120}
\circsegment{60}{\circRadius}{150}
\segment{-30}{[shift=(-80:\spherescale)]0,0}{-50};
\draw[\monodromystyle,color=\colorlinebundlebottom,looseness=1.2]
($(p)+(-55:-0.2pt)$)
\segment{-145}{$(circCentreShift)+(150:\circRadiusShift)$}{-120}
\circsegment{60}{\circRadiusShift}{150}
\segment{-30}{[shift=(-100:\spherescale)]0,0}{-50};
\fi
\ifx\type\typeTwo
\node[color=\colorlinebundlebottom] at (0.23*\spherescale,-0.78*\spherescale) {$\scriptstyle \otimes\mathcal{O}_{Y_\Pf}\!(1)$};
\draw[\monodromystyle,color=\colorlinebundlebottom, looseness=1.2,decoration={
 markings,
 mark=at position 0.5 with {\arrow[rotate=5]{>}}},postaction=decorate]
(q)
\segment{-30}{[shift=(-90:0.85*\spherescale)]0,0}{-30}
\segment{-30}{[shift=(-80:\spherescale)]0,0}{-50};
\draw[\monodromystyle,color=\colorlinebundlebottom,bend right,looseness=0.5]
(q) to ([shift=(-100:\spherescale)]0,0);
\fi
\draw[\monodromystyle,color=\colorlinebundlebottom,bend left,densely dotted] ([shift=(-100:\spherescale)]0,0) to ([shift=(-80:\spherescale)]0,0);

\filldraw[color=white] (p) circle (2pt);
\filldraw (p) circle (1pt);
\ifx\type\typeTwo
\filldraw[color=white] (q) circle (2pt);
\filldraw (q) circle (1pt);
\fi
\draw[thick] ([shift=(-84:\spherescale)]0,0) arc (-84:84:\spherescale)
[bend left] to (96:\spherescale)
arc (96:264:\spherescale)
[bend left] to cycle;
\end{tikzpicture}
}
\newcommand\labelpos{1.3}
\newcommand\labelposrelax{0.2}
\newcommand\labeladjust{0.05}
\newcommand\monodScale{1.2}
\newcommand{\monodromiesStandardFlop}[4]{
\begin{tikzpicture}[scale=\monodScale,node distance=1cm, auto, line width=0.5pt]

\node (neg) at (-\labelpos-\labelposrelax,-\labeladjust) {#1};

\draw[<-, rounded corners=10pt] (-0.6,-0.1) -- (0.4,-0.35) -- (0.4,0.35) -- (-0.6,0.1);
\draw[<-, rounded corners=10pt] (-0.85,-0.4) -- (0,-1.5) -- (0.5,-1) -- (-0.7,-0.3) ;
\draw[->, rounded corners=10pt] (-0.85,0.4) -- (0,1.5) -- (0.5,1) -- (-0.7,0.3) ;

\draw (0,-1) circle (.4ex);
\draw (0,0) circle (.4ex);
\draw (0,+1) circle (.4ex);

\node at (0.8,-1.2) {\scriptsize #2};
\node at (0.8,0) {\scriptsize #3};
\node at (0.7,1.2) {\scriptsize #4};

\end{tikzpicture}
}
\newcommand{\halfMonodromiesStandardFlop}[3]{
\begin{tikzpicture}[scale=\monodScale, node distance=1cm, auto, line width=0.5pt]

\node (pos) at (-\labelpos-\labelposrelax,-\labeladjust) {#2};
\node (neg) at (+\labelpos+\labelposrelax,-\labeladjust) {#1};

\draw[->, rounded corners=30pt] (-1.1,-0.4) -- (0,-1.8) -- (1.1,-0.4) ;
\node (b) at (1.0,-1.2) {\scriptsize $#3^3$};
\draw[->, rounded corners=15pt] (-0.85,-0.3) -- (0,-0.8) -- (0.85,-0.3) ;
\node (b) at (0.4,-0.8) {\scriptsize $#3^2$};
\draw[->, rounded corners=15pt] (-0.85,0.3) -- (0,0.7) -- (0.85,0.3) ;
\node (b) at (0.4,0.8) {\scriptsize $#3^1$};
\draw[->, rounded corners=30pt] (-1.1,0.4) -- (0,1.8) -- (1.1,0.4) ;
\node (b) at (0.8,1.3) {\scriptsize $#3^0$};

\draw (0,-1) circle (.4ex);
\draw (0,0) circle (.4ex);
\draw (0,+1) circle (.4ex);

\end{tikzpicture}
}
\newcommand{\halfFullMonodromiesStandardFlop}[3]{
\begin{tikzpicture}[scale=\monodScale, node distance=1cm, auto, line width=0.5pt]

\node (pos) at (-\labelpos-\labelposrelax,-\labeladjust) {#2};
\node (neg) at (+\labelpos+\labelposrelax,-\labeladjust) {#1};

\draw[->, rounded corners=30pt] (-1.1,-0.4) -- (0,-1.8) -- (1.1,-0.4) ;
\node (b) at (1.0,-1.2) {\scriptsize $#3^3$};
\draw[->, rounded corners=15pt] (-0.85,-0.3) -- (0,-0.8) -- (0.85,-0.3) ;
\node (b) at (0.4,-0.8) {\scriptsize $#3^2$};
\draw[->, rounded corners=15pt] (-0.85,0.3) -- (0,0.7) -- (0.85,0.3) ;
\node (b) at (0.4,0.8) {\scriptsize $#3^1$};
\draw[->, rounded corners=30pt] (-1.1,0.4) -- (0,1.8) -- (1.1,0.4) ;
\node (b) at (0.8,1.3) {\scriptsize $#3^0$};

\draw[<-, rounded corners=10pt] (2.54,-0.1) -- (3.4,-0.35) -- (3.4,0.35) -- (2.4,0.1);
\node at (4.1,0) {\scriptsize $\otimes\cO_{Y_{\Pf}}(1)$};
\draw (3,0) circle (.4ex);

\draw[->, rounded corners=10pt] (-2.4,-0.1) -- (-3.4,-0.35) -- (-3.4,0.35) -- (-2.4,0.1);
\node at (-4.1,0) {\scriptsize $\otimes\cO_{Y_{\Gr}}(1)$};
\draw (-3,0) circle (.4ex);

\draw (0,-1) circle (.4ex);
\draw (0,0) circle (.4ex);
\draw (0,+1) circle (.4ex);

\end{tikzpicture}
}
\begin{document}

\newcommand{\arXivNumber}{2009.12630}

\renewcommand{\thefootnote}{}

\renewcommand{\PaperNumber}{028}

\FirstPageHeading

\ShortArticleName{Stringy K\"ahler Moduli for the Pfaffian--Grassmannian Correspondence}

\ArticleName{Stringy K\"ahler Moduli for the Pfaffian--Grassmannian Correspondence\footnote{This paper is a~contribution to the Special Issue on Primitive Forms and Related Topics in honor of~Kyoji Saito for his 77th birthday. The full collection is available at \href{https://www.emis.de/journals/SIGMA/Saito.html}{https://www.emis.de/journals/SIGMA/Saito.html}}}

\Author{Will DONOVAN}

\AuthorNameForHeading{W.~Donovan}

\Address{Yau Mathematical Sciences Center, Tsinghua University,\\ Haidian District, Beijing 100084, China}
\Email{\href{mailto:donovan@mail.tsinghua.edu.cn}{donovan@mail.tsinghua.edu.cn}}

\ArticleDates{Received September 29, 2020, in final form March 10, 2021; Published online March 24, 2021}

\Abstract{The Pfaffian--Grassmannian correspondence relates certain pairs of derived equi\-valent non-birational Calabi--Yau 3-folds. Given such a pair, I construct a set of deri\-ved equivalences corresponding to mutations of an exceptional collection on the relevant Grassmannian, and give a mirror symmetry interpretation, following a physical analysis of~Eager, Hori, Knapp, and Romo.}

\Keywords{Calabi--Yau threefolds; stringy K\"ahler moduli; derived category; derived equi\-va\-lence; matrix factorizations; Landau--Ginzburg model; Pfaffian; Grassmannian}

\Classification{14F08; 14J32; 14M15; 18G80; 81T30}

\renewcommand{\thefootnote}{\arabic{footnote}}
\setcounter{footnote}{0}

\section{Introduction}

Birational Calabi--Yau $3$-folds are known to have equivalent derived categories~\cite{BriFlop}. There also exist pairs of Calabi--Yau $3$-folds which are not birational but may be proved to be derived equivalent. A much-studied class of examples comes from the ``Pfaffian--Grassmannian'' correspondence, concerning pairs of $3$-folds arising as linear sections of the Grassmannian $\Gr(2,7)$ and~its projective dual Pfaffian. R\o{}dland conjectured that such pairs share a mirror~\cite{Rodland}, leading to~an~expectation that they are derived equivalent: this was proved by Borisov and \Caldararu~\cite{BorCal}, and Kuznetsov \cite{KuznetsovHPDlines}.

Meanwhile, Hori and Tong \cite{HT} gave a physical explanation of how such pairs of $3$-folds arise from the same gauged linear $\sigma$-model.
In work of Addington, Segal, and the author~\cite{ADS}, a partial mathematical interpretation of this was given by constructing a particular derived equivalence for each pair using categories of matrix factorizations.

According to mirror symmetry, the derived symmetries of a variety may be determined by~mono\-dromy on a stringy K\"ahler moduli space (SKMS). Hori and Tong described this space for such $3$-folds: implicit in this was a prediction of further equivalences, corresponding to ``grade restriction windows'', see~\cite{HHP,Segal}. Later physics work made these windows explicit, and argued that differences between equivalences are given by spherical twists: see Eager, Hori, Knapp, and Romo~\cite{EHKR}, and Hori~\cite[end of Section~5]{HoriKyoto}.

In this paper, I interpret this physics work by constructing, for each pair of $3$-folds coming from the Pfaffian--Grassmannian correspondence, a set of equivalences corresponding to the ``windows'' above by extending the methods of~\cite{ADS}. I then show that these equivalences, along with appropriate spherical twists, may be organized into an action of the fundamental group of~the relevant SKMS.

\subsection{Calabi--Yau pairs}\label{sec.threefolds} I recall the construction of the $3$-folds $Y_{\Gr}$ and $Y_{\Pf}$. Start with a $7$-dimensional vector space $V$, and consider the following.
\begin{itemize}\itemsep=0pt
\item The Grassmannian of $2$-planes in its Pl\"ucker embedding
\begin{equation*}
\Gr(2,V) \subset \P\big({\wedge}^2 V\big).
\end{equation*}
\item The Pfaffian of 2-forms on $V$ of rank at most $4$, denoted
\begin{equation*}
\Pf(4,V) \subset \P\big({\wedge}^2 V^\vee\big).
\end{equation*}
\end{itemize}
These varieties are projectively dual. The Pfaffian $\Pf(4,V)$ is singular along the locus of forms of~rank at most $2$, but taking sufficiently generic hyperplane sections yields a smooth Calabi--Yau $3$-fold~$Y_{\Pf}$. Taking a dimension 7 subspace $\Pi \subset \wedge^2 V^\vee$ and its annihilator $\Pi^\circ \subset \wedge^2 V$, we then obtain smooth Calabi--Yau $3$-folds as follows:\footnote{For details, see \cite{BorCal}. In fact, if $\Pi$ is chosen such that $Y_\Pf$ is a smooth $3$-fold, then the same is true of $Y_\Gr$~\cite[Corollary~2.3]{BorCal}.}
\begin{gather*}
Y_{\Gr} = \Gr(2,V) \cap \P\Pi^\circ,\\
Y_{\Pf} = \Pf(4,V) \cap \P\Pi.
\end{gather*}

\subsection{Equivalences}

For each pair $Y_{\Gr}$ and $Y_{\Pf}$, I construct a set of derived equivalences depending on a discrete parameter given as follows:
\[
\underline{m}=(m_0,m_1,m_2)\in\mathbb{Z}^3\qquad \text{such that}\quad m_l \leq m_{l+1} \leq m_l + 1.
\]
Each choice of $\underline{m}$ gives an exceptional collection on $\Gr(2,V)$ by successive mutations of a collection due to Kuznetsov (Proposition~\ref{prop.exc}). This collection is determined by a Lefschetz block
\[
\cO_{\Gr}(m_0), \qquad
S(m_1), \qquad
\Sym^2 S(m_2),
\]
where $S$ is the rank $2$ tautological subspace bundle on $\Gr(2,V)$.

Now, by extending the construction of \cite{ADS}, I prove the following.

\begin{thm}[Theorem~\ref{thm.equiv}]\label{mainthm.equiv}
For each $\underline{m}\in\mathbb{Z}^3$ as above, an equivalence
\begin{equation*}
\Psi^{\underline{m}}\colon\ \D(Y_{\Gr}) \overset{\sim}{\longrightarrow} \D(Y_{\Pf})
\end{equation*}
may be constructed using the exceptional collection on $\Gr(2,V)$ given by~$\underline{m}$.
\end{thm}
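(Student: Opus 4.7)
The plan is to extend the construction of~\cite{ADS} in a family indexed by~$\underline{m}$. Recall that~\cite{ADS} realises $Y_{\Gr}$ and $Y_{\Pf}$ as the critical loci in the two geometric GIT phases of a single gauged Landau--Ginzburg model $(\mfX, W)$ with $\GL(2)$ gauge action, and produces one derived equivalence by choosing a subcategory $\mathcal{W} \subset \D(\mfX, W)$ of equivariant matrix factorizations --- a grade restriction window --- that restricts isomorphically onto each phase. For each admissible $\underline{m}$ I build a window $\mathcal{W}^{\underline{m}} \subset \D(\mfX, W)$ attuned to the Lefschetz block $(\cO_{\Gr}(m_0), S(m_1), \Sym^2 S(m_2))$ of Proposition~\ref{prop.exc}, and define
\[
\Psi^{\underline{m}}\colon \D(Y_{\Gr}) \overset{\sim}{\longleftarrow} \mathcal{W}^{\underline{m}} \overset{\sim}{\longrightarrow} \D(Y_{\Pf}),
\]
the two arrows being restriction of matrix factorizations to the corresponding phase.

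\textbf{Defining $\mathcal{W}^{\underline{m}}$.} Equivariant sheaves on $\mfX$ decompose along the irreducible $\GL(2)$-representations $\Sym^a S^\vee \otimes (\det S^\vee)^b$. I take $\mathcal{W}^{\underline{m}}$ to be generated by matrix factorizations whose weights sit in the range swept out by successive twists of the Lefschetz block specified by $\underline{m}$ across the Calabi--Yau dimension, in direct parallel to the window used in~\cite{ADS}. The admissibility condition $m_l \leq m_{l+1} \leq m_l + 1$ is exactly what is needed for the resulting weight region to have the correct total width to match an exceptional collection on the Grassmannian side and, dually, to be compatible with the deleted locus on the Pfaffian side.

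\textbf{Restriction to the Grassmannian phase.} In the Grassmannian phase, the critical locus of $W$ is cut out by a regular section on $\Gr(2,V)$, and Kn\"orrer periodicity (Isik--Shipman) identifies the relevant matrix-factorization category with $\D(Y_{\Gr})$. By construction the restriction of $\mathcal{W}^{\underline{m}}$ then matches the subcategory generated by the Lefschetz exceptional collection of Proposition~\ref{prop.exc}; full faithfulness reduces to the exceptional-collection Ext vanishings encoded in that proposition, and essential surjectivity follows from the fact that the collection generates $\D(Y_{\Gr})$.

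\textbf{Restriction to the Pfaffian phase, and the main obstacle.} The hard step, as already in~\cite{ADS}, is the Pfaffian phase, whose critical locus has only a noncommutative matrix-factorization description; here one must verify directly that the restriction $\mathcal{W}^{\underline{m}} \to \D(Y_{\Pf})$ is an equivalence. The core of this is a Koszul-type cohomology vanishing on the Pfaffian phase stack for each of the $\GL(2)$-weights populating the shifted window, together with a generation statement. The principal technical task beyond~\cite{ADS} is to verify these vanishings \emph{uniformly} in $\underline{m}$: the argument there dealt with a single choice of weights, and one must now confirm that the inequalities $m_l \leq m_{l+1} \leq m_l + 1$ cut out precisely the $\underline{m}$ for which the window is simultaneously admissible on both sides. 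Once both restrictions are shown to be equivalences, the composite $\Psi^{\underline{m}}$ is the desired derived equivalence.
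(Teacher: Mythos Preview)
Your overall architecture---window inside $\D(\mfX,f)$, restriction to the two phases, Kn\"orrer periodicity on the Grassmannian side---matches the paper. The genuine gap is your treatment of the Pfaffian side.

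You assert that one should ``verify directly that the restriction $\mathcal{W}^{\underline{m}} \to \D(Y_{\Pf})$ is an equivalence'' via vanishing plus a generation statement. In the paper this is \emph{not} what happens, and indeed it cannot happen in the form you describe. The restriction $i_{\Pf}^*\colon \cW^{\underline{m}}\to \D(X_{\Pf},f)$ is only shown to be \emph{fully faithful} (Proposition~\ref{prop.equivs}); it is not essentially surjective onto $\D(X_{\Pf},f)$, and there is no Kn\"orrer-type identification of $\D(X_{\Pf},f)$ with $\D(Y_{\Pf})$ (nor a noncommutative-resolution model) available here. Instead one \emph{defines} $\Br^{\underline{m}}(X_{\Pf},f)$ to be the essential image of $i_{\Pf}^*$, and then constructs a separate functor
\[
\Psi_{\Pf}^{\underline{m}} = k_* j_* j^* \pi^* \colon \D(Y_{\Pf}) \longrightarrow \D(X_{\Pf},f)
\]
using a geometric correspondence $\Gamma\subset X_{\Pf}$ (Definition~\ref{defn:Gamma}), closely related to the Borisov--\Caldararu\ kernel. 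The point that makes this $\underline{m}$-dependent is that $j_*\cO_\Gamma$ has an Eagon--Northcott resolution with terms $\Sym^l S$ for $l=0,1,2$; combining this with Beilinson resolutions on $\P^6$ whose line-bundle range can be slid to any interval $(m_l-7,m_l]$, one checks that the image of $\Psi_{\Pf}^{\underline{m}}$ lands inside $\Br^{\underline{m}}(X_{\Pf},f)$ (Proposition~\ref{prop.Flandsinwindow}). The equivalence property of $\Psi_{\Pf}^{\underline{m}}$ is then inherited from the argument of~\cite[Theorem~5.12]{ADS}.

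So the $\underline{m}$-dependence on the Pfaffian side enters through this Eagon--Northcott/Beilinson resolution step, not through a uniform vanishing-plus-generation argument. The cohomology vanishings you allude to (Section~\ref{sec.vanP}, in particular the bound of Lemma~\ref{lem.induct}) give only full faithfulness of $i_{\Pf}^*$; they do not, and cannot, produce the bridge to $\D(Y_{\Pf})$. Your proposal is missing this entire ingredient.
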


Section~\ref{sec.struct} outlines how $\Psi^{\underline{m}}$ is constructed: it is a ``window equivalence'', where generators are obtained from the collection given by $\underline{m}$. The equivalence obtained in~\cite{ADS} corresponds to $\underline{m}=(6,7,8)$ (Remark~\ref{rem.adsgens}).

\subsection{Physics} The SKMS in our case is described in the physics literature as a sphere with five punctures: see~\cite[Fig.~1]{HT} and~\cite[Section~4]{EHKR}. Our $3$-folds correspond to ``large radius limits'' near two of these punctures, shown below as the poles. In this picture, homotopy classes of paths between large radius limits are expected to correspond to derived equivalences, given by ``grade restriction windows''. Such equivalences are supplied by Theorem~\ref{mainthm.equiv}. Monodromy around the other three punctures are then expected to correspond to spherical twists \cite[end of Section~5]{HoriKyoto}. Theorems~\ref{keythm.groupoid} and~\ref{keythm.twists} below confirm these expectations from the physics literature.

\subsection{Groupoid action}\label{sec.introactions}
I show that the fundamental groupoid of the SKMS acts on the derived categories $\D(Y_{\Gr})$ and~$\D(Y_{\Pf})$.

Let $\cM = S^2 - \{\text{$5$ points}\}$ where two of the punctures are the poles, and basepoints $m_{\Gr}$ and $m_{\Pf}$ are chosen near them. Following the physics analysis~\cite[Section~4]{EHKR}, I choose a finite subset~$\Psi^k$, $0\leq k\leq 3$, of the equivalences~$\Psi^{\underline{m}}$ of Theorem~\ref{mainthm.equiv} as follows:
\begin{equation*}
\Psi^k = \Psi^{\underline{m}},\qquad \text{where}\quad
m_l = \begin{cases}
-1, & l<k, \\ \hphantom{-}0, & l\geq k.
\end{cases}
\end{equation*}
The corresponding exceptional collections are illustrated in Section~\ref{sec.cols} as \mbox{Collections~\ref{cola}--\ref{cold}}. Combining the construction of Theorem~\ref{mainthm.equiv} with standard techniques for manipulating window equi\-va\-lences, I obtain the following.
\begin{thm}[Theorem~\ref{main thm}]\label{keythm.groupoid}
There is an action of the fundamental groupoid $\pi_1(\cM,\{m_{\Gr},m_{\Pf}\})$ on $\D(Y_{\Gr})$ and $\D(Y_{\Pf})$, given by the following diagram:
\picskip
\begin{center}
\skmsPic{2}{0}
\end{center}
\end{thm}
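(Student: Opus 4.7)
The plan is to establish the groupoid action in three steps: present $\pi_1(\cM,\{m_{\Gr},m_{\Pf}\})$ by generators and relations, assign a derived equivalence to each generator, and then verify that the relations lift to natural isomorphisms of functors.

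For the presentation, I would use the fact that $\cM$ is a $5$-punctured sphere, so $\pi_1(\cM,\cdot) \cong F_4$. The groupoid on $\{m_{\Gr},m_{\Pf}\}$ is generated by the four paths $\Psi^0,\Psi^1,\Psi^2,\Psi^3$ from $m_{\Gr}$ to $m_{\Pf}$, drawn in the diagram with each $\Psi^k$ passing to a different side of the three equatorial punctures, together with the loops $\lambda_{\Gr}$ at $m_{\Gr}$ and $\lambda_{\Pf}$ at $m_{\Pf}$ going around the polar punctures. The only relation comes from the sphere topology: the total monodromy around all five punctures is trivial. Concretely, the three equatorial loops based at $m_{\Gr}$ can be realized as the adjacent compositions $(\Psi^{k})^{-1} \circ \Psi^{k-1}$ (up to conjugation by $\Psi^0$), and these must compose with $\lambda_{\Gr}$ and the conjugate of $\lambda_{\Pf}$ by $\Psi^0$ to the identity.

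Next I would assign functors: send each $\Psi^k$ to the equivalence of Theorem~\ref{mainthm.equiv} attached to the corresponding $\underline{m}$ (Collections~\ref{cola}--\ref{cold}), send $\lambda_{\Gr}$ to $(-\otimes\cO_{Y_{\Gr}}(1))$, and send $\lambda_{\Pf}$ to $(-\otimes\cO_{Y_{\Pf}}(1))$. The content of the theorem is then that the sphere relation is satisfied by these functors. This reduces to two facts. First, each adjacent composition $(\Psi^{k+1})^{-1} \circ \Psi^k$ must be a spherical twist around the corresponding Lefschetz block object $\cO$, $S$, or $\Sym^2 S$ (pulled over from $\Gr(2,V)$ to $Y_{\Gr}$), which is the content of Theorem~\ref{keythm.twists}. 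Second, the composition of these three spherical twists, combined with the line-bundle twists at the poles, must give the identity; this is a cyclicity property of the Lefschetz structure, closely related to Serre duality on $Y_{\Gr}$ and $Y_{\Pf}$ being Calabi--Yau.

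The main obstacle is the first of these facts: identifying the difference between two adjacent window equivalences from Theorem~\ref{mainthm.equiv} as a spherical twist around the object that gets mutated. The paradigm here is the standard grade-restriction-window technique: when the window is shifted past a single generator of an exceptional collection, the composite of the new equivalence with the inverse of the old agrees with the identity on most generators and sends the mutated generator to (a shift of) its Koszul dual, yielding a spherical twist. Carrying this out explicitly in the matrix factorization setup of~\cite{ADS}, and verifying that the spherical object is the expected Lefschetz block object on $Y_{\Gr}$, is the technical heart of the argument and is what must be supplied by Theorem~\ref{keythm.twists}. Once that is in hand, checking the remaining cyclicity relation is a direct computation in $\D(Y_{\Gr})$ using the structure of the Lefschetz collection.
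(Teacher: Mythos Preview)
Your plan inverts the logical order of the paper and, in doing so, misses the actual content of the proof. In the paper, Theorem~\ref{keythm.groupoid} is proved \emph{before} and \emph{independently of} Theorem~\ref{keythm.twists}; the spherical twist identifications are not used at all. The only relation to check is that the large loop around all five punctures acts trivially, and this amounts to the single isomorphism
\[
\Psi^3 \;\cong\; (-\otimes\cO_{Y_{\Pf}}(1))\circ\Psi^0 \circ(-\otimes\cO_{Y_{\Gr}}(1)).
\]
This follows directly from the observation that the two windows differ by a line bundle twist, $\cW^3 = \cW^0 \otimes \cO_{\mfX}(-1)$, together with an elementary intertwining of the Kn\"orrer-type equivalences $\Psi_{\Gr}$ and $\Psi_{\Pf}$ with line bundle twists (Proposition~\ref{prop.cyinter}). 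No spherical objects, no mutations, no ``cyclicity via Serre duality'' are needed.

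Your proposed route has two concrete problems. First, invoking Theorem~\ref{keythm.twists} here is circular in the paper's framework: that theorem is deduced \emph{from} Theorem~\ref{keythm.groupoid} together with Proposition~\ref{prop.twists}. Second, even granting the twist identifications $(\Psi^{k+1})^{-1}\Psi^k \cong \Tw^k$ from Proposition~\ref{prop.twists}, composing them only gives $(\Psi^3)^{-1}\Psi^0 \cong \Tw^2\Tw^1\Tw^0$; to close the sphere relation you would still need exactly the displayed isomorphism above relating $\Psi^3$ and $\Psi^0$ via polar line bundle twists. Your sketch defers this to a ``direct computation using the Lefschetz structure,'' but that is not how it goes: it is a window computation on $\mfX$, not a computation with spherical twists on $Y_{\Gr}$. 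The paper isolates precisely this step and proves it in two lines.
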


\subsection{Mutations and group action} The exceptional collections for the~$\Psi^k$ are related by mutations of exceptional objects as follows:
\[
\begin{tikzpicture}[xscale=2.5]
\node (a) at (-0.5,0) {Collection \ref{cola}};
\node (b) at (1,0) {\ref{colb}};
\node (c) at (2,0) {\ref{colc}};
\node (d) at (3,0) {\ref{cold}.};
\draw[->] (a) to node[above]{$\cO_{\Gr}$} (b);
\draw[->] (b) to node[above]{$S_{\phantom{\Gr}}$} (c);
\draw[->] (c) to node[above]{$\Sym^2 S$} (d);
\end{tikzpicture}
\]
The restrictions of these three exceptional objects to $Y_{\Gr}$ are the three spherical objects below (Proposition~\ref{prop.sphobj}):
\begin{equation}\label{eq.sphs}
\cO_{Y_{\Gr}}, \qquad S_{Y_{\Gr}}, \qquad \Sym^2 S_{Y_{\Gr}}.
\end{equation}

\begin{rem}
This phenomenon is analogous to exceptional objects restricting to spherical objects on an anticanonical divisor~\cite[Example~3.14(c)]{ST}, though here the codimension of $Y_{\Gr}$ is~$7$.
\end{rem}

I then show (Proposition~\ref{prop.twists}) that the ``differences'' between equivalences \[\big(\Psi^{j+1}\big)^{-1} \circ \Psi^j\]
 are spherical twists around the objects~\eqref{eq.sphs}. Combining this with Theorem~\ref{keythm.groupoid}, I deduce the following.

\begin{thm}[Theorem~\ref{thm.twists}]\label{keythm.twists}
There is an action of the fundamental group $\pi_1(\cM,m_{\Gr})$ on $\D(Y_{\Gr})$ given by the following diagram:
\picskip
\begin{center}
\skmsPic{1}{0}
\end{center}
\picskip
Here we let $\Psi=\Psi^3$, and the loops around equatorial holes indicate spherical twists around the objects~\eqref{eq.sphs}.
\end{thm}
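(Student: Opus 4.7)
The plan is to obtain this theorem as a corollary of Theorem~\ref{keythm.groupoid} together with Proposition~\ref{prop.twists}. Since $\pi_1(\cM, m_\Gr)$ is the vertex group at $m_\Gr$ of the groupoid $\pi_1(\cM, \{m_\Gr, m_\Pf\})$, any action of the latter automatically restricts to an action of the former on $\D(Y_\Gr)$. The genuine content of the theorem is therefore to identify the resulting autoequivalences with those specified in the accompanying diagram.

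First I would recall that $\cM$ is a five-punctured sphere, so $\pi_1(\cM, m_\Gr)$ is free of rank four, with one relation among the five natural generators saying that their ordered product is trivial; it is generated by small loops based at $m_\Gr$ and encircling each puncture. The loop around the top pole (near $m_\Gr$) already appears as a generator in the diagram of Theorem~\ref{keythm.groupoid}, and acts as $\otimes\cO_{Y_\Gr}(1)$; this requires no additional argument.

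For each of the three equatorial punctures, the corresponding small loop based at $m_\Gr$ can be realised as the concatenation of the path representing $\Psi^j$ (going from $m_\Gr$ to $m_\Pf$ on one side of the puncture) with the reverse of the path representing $\Psi^{j+1}$ (returning on the other side), for a suitable $j\in\{0,1,2\}$. Under the groupoid action this gives $(\Psi^{j+1})^{-1}\circ\Psi^j$, which by Proposition~\ref{prop.twists} is the spherical twist around the corresponding object in~\eqref{eq.sphs}. Finally, a small loop based at $m_\Gr$ around the bottom pole is freely homotopic to the concatenation: travel from $m_\Gr$ to $m_\Pf$ along $\Psi=\Psi^3$, encircle the bottom pole near $m_\Pf$ (which acts as $\otimes\cO_{Y_\Pf}(1)$ by Theorem~\ref{keythm.groupoid}), and return along $\Psi^{-1}$; hence its image is the conjugated autoequivalence $\Psi^{-1}\circ(\otimes\cO_{Y_\Pf}(1))\circ\Psi$ shown in the diagram.

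The main obstacle will be the topological bookkeeping: one must verify that the specific homotopy classes of loops indicated in the picture really do correspond, in the labelling of the four paths $\Psi^0,\dots,\Psi^3$ inherited from the physics analysis of~\cite{EHKR}, to the decompositions above, and in particular that the three equatorial loops are ordered consistently with the mutation sequence relating the collections. Once this matching is in place, the identification of each generator with the claimed autoequivalence follows immediately from the two cited results, and the theorem is proved.
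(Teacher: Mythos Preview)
Your proposal is correct and follows essentially the same route as the paper: restrict the groupoid action of Theorem~\ref{keythm.groupoid} to the vertex group at $m_{\Gr}$, invoke Proposition~\ref{prop.twists} to identify the three equatorial monodromies as the spherical twists, and read off the two pole monodromies (the second via conjugation by $\Psi$) directly from the groupoid picture. The paper's own proof is terser, treating the pole monodromies as ``immediate'' and relying on the diagrams rather than spelling out the homotopies, but the logic is the same.
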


It would be interesting to recover $\cM$, and the above actions, from an analysis of Bridgeland stability conditions.

\subsection{Contents}
Section~\ref{sec.cols} explains the exceptional collections in Theorem~\ref{mainthm.equiv}, Section~\ref{sec.struct} outlines the structure of the proof, and then Sections~\ref{sec.equivLG} and~\ref{sec.relating} give the details. Section~\ref{sec groupoid} proves Theorem~\ref {keythm.groupoid}, and Section~\ref{sec twists} proves Theorem~\ref {keythm.twists}.

\section{Exceptional collections} \label{sec.cols}

In this section, I explain the exceptional collections in Theorem~\ref{mainthm.equiv}, and convenient ways to visualize them.

The construction in \cite{ADS} used the following full exceptional collection on the Grassmannian $\Gr(2,V)$ for a 7-dimensional vector space $V$~\cite[Theorem~4.1]{KuznetsovECs}, where $S$ denotes the rank $2$ tautological subspace bundle.
\begin{equation}\label{eq:rectangularcollectionnew}
\WindowSet{\Sym^l S^\vee \otimes (\det S^\vee)^m_{\InSpaceOf{,}{}}}{3}{0,7}.
\end{equation}
I depict the collection \eqref{eq:rectangularcollectionnew} as follows, where as usual we write $\cO(1)$ for $\det S^\vee$, and for convenience in this section put $S^2$ for $\Sym^2 S$:
\def\boxpad{0.5}
\def\boxpadh{2.5*\boxpad}
\def\boxpadv{\boxpad}
\def\shift{0.25}
\def\midshift{\shift}
\def\topshift{2*\shift}
\def\hshift{0}
\[
\begin{tikzpicture}[xscale=0.65,yscale=0.65]
\node at (\hshift+0,0) {$\smallGLSrep{0}{0}$};
\node at (\hshift+1-\midshift,1) {$\smallGLSrep{1}{0}$};
\node at (\hshift+2-\topshift,2) {$\smallGLSrep{2}{0}$};
\node at (\hshift+3,0) {$\cdots$};
\node at (\hshift+4-\midshift,1) {$$};
\node at (\hshift+5-\topshift,2) {$\cdots$};
\node at (\hshift+6,0) {$\smallGLSrep{0}{6}$};
\node at (\hshift+7-\midshift,1) {$\smallGLSrep{1}{6}$};
\node at (\hshift+8-\topshift,2) {$\smallGLSrep{2}{6}$};

\draw[gray] (0-\boxpadh,0-\boxpadv) -- (2-\boxpadh,2+\boxpadv) -- (8+\boxpadh,2+\boxpadv) -- (6+\boxpadh,0-\boxpadv) -- cycle;
\end{tikzpicture}
\]

\begin{rem} The diagonal display format matches with~\cite{HoriKyoto} and is used later to visualize the direction of $\Hom$s in the collection: see Remark~\ref{rem.diags}.
\end{rem}

It will be helpful instead to take a dual exceptional collection as follows.\footnote{Note that this is an exceptional collection because, for locally free sheaves $\cE$ and $\cF$, we have $\Ext^\bullet(\cE,\cF)=\Ext^\bullet\big(\cF^\vee,\cE^\vee\big)$.}
\def\lowshift{1.5*\shift}
\def\midshift{0*\shift}
\def\topshift{-2*\shift}
\def\hshift{0.2}
\setcounter{col}{-1}
\begin{col}
\label{cola}
\begin{equation*}
\begin{tikzpicture}[xscale=0.65,yscale=0.65]

\node at (\hshift+-6-\lowshift,0) {$\smallGLSrepnew{0}{-6}$};
\node at (\hshift+-7-\midshift,1) {$\smallGLSrepnew{1}{-6}$};
\node at (\hshift+-8-\topshift,2) {$\smallGLSrepnew{2}{-6}$};
\node at (\hshift+-3-\lowshift,0) {$\cdots$};
\node at (\hshift+-4-\midshift,1) {$$};
\node at (\hshift+-5-\topshift,2) {$\cdots$};
\node at (\hshift+0-\lowshift,0) {$\smallGLSrepnew{0}{0}$};
\node at (\hshift+-1-\midshift,1) {$\smallGLSrepnew{1}{0}$};
\node at (\hshift+-2-\topshift,2) {$\smallGLSrepnew{2}{0}$};

\draw[gray] (-6-\boxpadh,0-\boxpadv) -- (-8-\boxpadh,2+\boxpadv) -- (-2+\boxpadh,2+\boxpadv) -- (\boxpadh,0-\boxpadv) -- cycle;

\end{tikzpicture}
\end{equation*}
\end{col}

\noindent From this collection, we deduce others by mutation. Namely, fixing an order for the collection $\langle \cE_1, \dots, \cE_n \rangle$, it is standard that mutating $\cE_n$ from right to left gives \begin{equation*}
L_1 \cdots L_{n-1} (\cE_n) = \Serre(\cE_n),
\end{equation*}
where the $L_i$ denote left mutation functors, and $\Serre$ is the Serre functor
\[
\Serre=-\otimes \omega_{\Gr}[\dim \Gr]=-\otimes\cO(-7)[\dim \Gr].
\]
Mutating in this way at $\cO$, and noting that the property of being an exceptional collection is invariant under the shift $[\dim \Gr]$, we obtain the following:
\def\lowshift{3.5*\shift}
\begin{col}
\label{colb}
\begin{equation*}
\begin{tikzpicture}[xscale=0.65,yscale=0.65]

\node at (\hshift+-7-\lowshift,0) {$\smallGLSrepnew{0}{-7}$};
\node at (\hshift+-7-\midshift,1) {$\smallGLSrepnew{1}{-6}$};
\node at (\hshift+-8-\topshift,2) {$\smallGLSrepnew{2}{-6}$};
\node at (\hshift+-4-\lowshift,0) {$\cdots$};
\node at (\hshift+-4-\midshift,1) {$$};
\node at (\hshift+-5-\topshift,2) {$\cdots$};
\node at (\hshift+-1-\lowshift,0) {$\smallGLSrepnew{0}{-1}$};
\node at (\hshift+-1-\midshift,1) {$\smallGLSrepnew{1}{0}$};
\node at (\hshift+-2-\topshift,2) {$\smallGLSrepnew{2}{0}$};

\draw[gray] (-8-\boxpadh,0-\boxpadv) -- (-7-\boxpadh,1) -- (-8-\boxpadh,2+\boxpadv) -- (-2+\boxpadh,2+\boxpadv) -- (-1+\boxpadh,1) -- (-2+\boxpadh,0-\boxpadv) -- cycle;

\end{tikzpicture}
\end{equation*}
\end{col}
\noindent Mutating at $S$ then gives:
\def\boxpadh{2.75*\boxpad}
\def\hshift{0.5}
\def\midshift{2*\shift}
\begin{col}
\label{colc}
\begin{equation*}
\begin{tikzpicture}[xscale=0.65,yscale=0.65]

\node at (\hshift+-7-\lowshift,0) {$\smallGLSrepnew{0}{-7}$};
\node at (\hshift+-8-\midshift,1) {$\smallGLSrepnew{1}{-7}$};
\node at (\hshift+-8-\topshift,2) {$\smallGLSrepnew{2}{-6}$};
\node at (\hshift+-4-\lowshift,0) {$\cdots$};
\node at (\hshift+-5-\midshift,1) {$$};
\node at (\hshift+-5-\topshift,2) {$\cdots$};
\node at (\hshift+-1-\lowshift,0) {$\smallGLSrepnew{0}{-1}$};
\node at (\hshift+-2-\midshift,1) {$\smallGLSrepnew{1}{-1}$};
\node at (\hshift+-2-\topshift,2) {$\smallGLSrepnew{2}{0}$};

\draw[gray] (-7-\boxpadh,0-\boxpadv) -- (-8-\boxpadh,1) -- (-7-\boxpadh,2+\boxpadv) -- (-1+\boxpadh,2+\boxpadv) -- (-2+\boxpadh,1) -- (-1+\boxpadh,0-\boxpadv) -- cycle;

\end{tikzpicture}
\end{equation*}
\end{col}

\noindent We now have two choices for how to mutate, at $\cO(-1)$ or $S^{2}$.


\noindent Mutating at $S^{2}$ gives a twist by $\cO(-1)$ of Collection~\ref{cola}:

\def\hshift{0.6}
\def\topshift{1*\shift}
\begin{col}
\label{cold}
\begin{equation*}
\begin{tikzpicture}[xscale=0.65,yscale=0.65]

\node at (\hshift+-6-\lowshift,0) {$\smallGLSrepnew{0}{-7}$};
\node at (\hshift+-7-\midshift,1) {$\smallGLSrepnew{1}{-7}$};
\node at (\hshift+-8-\topshift,2) {$\smallGLSrepnew{2}{-7}$};
\node at (\hshift+-3-\lowshift,0) {$\cdots$};
\node at (\hshift+-4-\midshift,1) {$$};
\node at (\hshift+-5-\topshift,2) {$\cdots$};
\node at (\hshift+0-\lowshift,0) {$\smallGLSrepnew{0}{-1}$};
\node at (\hshift+-1-\midshift,1) {$\smallGLSrepnew{1}{-1}$};
\node at (\hshift+-2-\topshift,2) {$\smallGLSrepnew{2}{-1}$};

\draw[gray] (-6-\boxpadh,0-\boxpadv) -- (-8-\boxpadh,2+\boxpadv) -- (-2+\boxpadh,2+\boxpadv) -- (\boxpadh,0-\boxpadv) -- cycle;

\end{tikzpicture}
\end{equation*}
\end{col}
\noindent Mutating at $\cO(-1)$ gives:
\def\hshift{0.25}
\def\shift{0.25}
\def\midshift{\shift}
\def\topshift{2*\shift}

\begin{col}
\label{cole}
\[
\begin{tikzpicture}[xscale=0.65,yscale=0.65]
\node at (\hshift+0,0) {$\smallGLSrepnew{0}{-8}$};
\node at (\hshift+1-\midshift,1) {$\smallGLSrepnew{1}{-7}$};
\node at (\hshift+2-\topshift,2) {$\smallGLSrepnew{2}{-6}$};
\node at (\hshift+3,0) {$\cdots$};
\node at (\hshift+4-\midshift,1) {$$};
\node at (\hshift+5-\topshift,2) {$\cdots$};
\node at (\hshift+6,0) {$\smallGLSrepnew{0}{-2}$};
\node at (\hshift+7-\midshift,1) {$\smallGLSrepnew{1}{-1}$};
\node at (\hshift+8-\topshift,2) {$\smallGLSrepnew{2}{0}$};

\draw[gray] (0-\boxpadh,0-\boxpadv) -- (2-\boxpadh,2+\boxpadv) -- (8+\boxpadh,2+\boxpadv) -- (6+\boxpadh,0-\boxpadv) -- cycle;
\end{tikzpicture}
\]
\end{col}
Noting then that being a full exceptional collection is invariant under twisting by $\cO(k)$, we obtain further collections summarized 
in the following.

\begin{prop}\label{prop.exc} Let $\underline{m}=(m_0,m_1,m_2)$ be a non-decreasing sequence of integers such that $m_{l+1} \leq m_l + 1$. Then
\begin{equation}
\WindowSetNew{\Sym^l S (m)}{2}{m_l-7,m_l}\end{equation}
gives a full exceptional collection on~$\Gr(2,V)$, for $\dim V = 7$.
\end{prop}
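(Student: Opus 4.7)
The plan is to reduce every case to Collection~\ref{cola} by combining iterated left mutations with twists by line bundles, both of which preserve the property of being a full exceptional collection on $\Gr(2,V)$.

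First, I would establish the base case $\underline{m}=(0,0,0)$. Its collection is Collection~\ref{cola}, which is termwise dual to Kuznetsov's full exceptional collection~\eqref{eq:rectangularcollectionnew}, and hence itself full and exceptional by~\cite[Theorem~4.1]{KuznetsovECs} together with the footnote that dualization preserves $\Ext$-vanishing. Twisting by $\cO(k)$ then covers the diagonal case $\underline{m}=(k,k,k)$ for every $k\in\Z$.

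Next, I would formalize the mutation step. For any full exceptional collection $\langle\cE_1,\ldots,\cE_n\rangle$, the composition of left mutations satisfies $L_1\cdots L_{n-1}(\cE_n)=\Serre(\cE_n)=\cE_n(-7)[\dim\Gr]$, using $\omega_{\Gr(2,V)}=\cO(-7)$. Since being an exceptional collection is invariant under an overall cohomological shift, this operation deletes $\cE_n$ from the right end and inserts $\cE_n(-7)$ on the left. When $\cE_n$ is the top representative $\Sym^l S(m_l)$ of the Lefschetz block indexed by $l$, the resulting collection has the same shape as the old one but with $m_l$ decreased by $1$.

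Using this step, I would walk through the mutation chain already presented before the statement: from Collection~\ref{cola} ($\underline{m}=(0,0,0)$), mutating at $\cO$ produces~\ref{colb} ($(-1,0,0)$), then at $S$ produces~\ref{colc} ($(-1,-1,0)$), and from~\ref{colc} one obtains either~\ref{cold} (which is $(-1,-1,-1)$, a twist of $(0,0,0)$ by $\cO(-1)$) or~\ref{cole} ($(-2,-1,0)$). Modulo the global twist $\underline{m}\mapsto\underline{m}+(k,k,k)$, these four configurations exhaust all non-decreasing triples with $m_{l+1}\leq m_l+1$, so combining with line bundle twists yields a full exceptional collection of the claimed form for every valid $\underline{m}$.

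The main obstacle is checking, at each mutation step, that the specified object (namely, the maximal twist $\cO(m_l)$, $S(m_l)$, or $\Sym^2 S(m_l)$) really is the rightmost element in the chosen ordering of the collection, and that the Serre-functor identity indeed produces the Lefschetz shape claimed and not something more exotic. Both points are essentially bookkeeping, made transparent by the diagonal display format of Remark~\ref{rem.diags}, in which the direction of $\Hom$s—and hence which object is rightmost—is encoded in the geometry of the picture.
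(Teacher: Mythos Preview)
Your proposal is correct and follows exactly the argument the paper gives in the discussion preceding the proposition: start from the dual of Kuznetsov's collection (Collection~\ref{cola}), use the Serre-functor description of the full left mutation to shift the rightmost block by $\cO(-7)$, and combine with global twists by $\cO(k)$ to reach all four equivalence classes of admissible $\underline{m}$. The only addition you make beyond the paper's exposition is the explicit enumeration of the four classes modulo twist and the remark about checking which object is rightmost, both of which are straightforward bookkeeping as you say.
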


\section{Structure of equivalence proof}
\label{sec.struct}

I outline the proof of the derived equivalences $\Psi^{\underline{m}}$ between $Y_{\Gr}$ and $Y_{\Pf}$ in Theorem~\ref{mainthm.equiv}, before giving the proof in the following Sections~\ref{sec.equivLG} and~\ref{sec.relating}.


These derived equivalences are obtained by showing that $Y_{\Gr}$ and $Y_{\Pf}$ are derived equivalent, in an appropriate sense, to Landau--Ginzburg models
\begin{equation*}(X_{\Gr}, f) \qquad\text{and}\qquad (X_{\Pf}, f).\end{equation*}
Here the space $X_{\Gr}$ is a bundle over $\Gr(2,V)$, the space $X_{\Pf}$ is an Artin stack which is described in the next section, and $f$ is a function defined on both these spaces: $f$~arises by restriction of a~function from an Artin stack $\mfX$, of which $X_{\Gr}$ and $X_{\Pf}$ are open substacks. This follows a~physics construction of Hori--Tong~\cite{HT}, where this function is a ``superpotential''.


Each equivalence $\Psi^{\underline{m}}$ is a composition of three equivalences as below, where $\D(X_{\Gr},f)$ is a~category of matrix factorizations (see for instance \cite[Section~2]{ADS}), and $\Br^{\underline{m}}(X_{\Pf},f)$ is a closely related ``B-brane category'' (Definition~\ref{def.bbrane}):
\begin{equation*}
\label{threeequivs}
\begin{tikzpicture}[scale=2.5,xscale=1.5,yscale=0.7]
\node (A) at (0,0) {$\D(X_{\Gr}, f) $};
\node (B) at (1,0) {$\Br^{\underline{m}}(X_{\Pf},f) $};
\node (C) at (0,1) {$\D(Y_{\Gr})$};
\node (D) at (1,1) {$\D(Y_{\Pf})$};
\draw[->] (A) -- node[below] {$ \scriptstyle \Psi_{\cW}^{\underline{m}}$} node[above] {$ \scriptstyle \sim $} (B);
\draw[->] (C) -- node[left] {$ \scriptstyle \Psi_{\Gr} $} node[above,sloped] {$ \scriptstyle \sim $}(A);
\draw[->] (D) -- node[right] {$ \scriptstyle \Psi_{\Pf}^{\underline{m}} $} node[below,sloped] {$ \scriptstyle \sim $}(B);
\draw[dashed,->] (C) -- node[above] {$ \scriptstyle \Psi^{\underline{m}}$}(D);
\end{tikzpicture}
\end{equation*}

The vertical arrows are constructed as in~\cite{ADS}, using that the spaces~$Y$ may be recovered from the spaces $X$. In particular, $Y_{\Gr}$ is the critical locus of $f$ in~$X_{\Gr}$, giving the left-hand equivalence $\Psi_{\Gr}$ via Kn\"orrer periodicity~\cite{Shipman}. The right-hand equivalence $\Psi_{\Pf}^{\underline{m}}$ is a generalized Kn\"orrer periodicity, established in~\cite[Section~5]{ADS}.

The horizontal equivalence $\Psi_{\cW}^{\underline{m}}$ is a ``window equivalence'', which factors via a certain sub\-ca\-tegory in $\D(\mfX,f)$, determined by one of the exceptional collections from Proposition~\ref{prop.exc}. This is explained and proved in the following Section~\ref{sec.equivLG}. The rest of the proof is then given in~Section~\ref{sec.relating}.

\begin{rem}
There is a general theory of window equivalences~\cite{BFK,HL}, however it does not yet apply in this case. In particular, it gives exceptional collections related to that of Kapranov~\cite{Kapranov} rather than those in Proposition~\ref{prop.exc}: this seems to make it unsuitable for giving an equivalence with the Calabi--Yau~$Y_{\Pf}$. For more discussion, see~\cite[Remark~4.12]{ADS}.\end{rem}

\begin{rem} In the physics literature~\cite[Section~7]{EHKR} a grade restriction window for $\underline{m}=(7,7,7)$ is associated with the derived equivalence of Borisov and \Caldararu\ \cite{BorCal}. It would be interesting to compare the latter with the~$\Psi^{\underline{m}}$ obtained in this paper.
\end{rem}

\section{Equivalences for Landau--Ginzburg models}
\label{sec.equivLG}

In this section, I construct the window equivalences $\Psi_{\cW}^{\underline{m}}$ used to prove Theorem~\ref{mainthm.equiv}, after reviewing the construction of $X_{\Gr}$ and $X_{\Pf}$, the underlying spaces of the Landau--Ginzburg models discussed above.

Let $S$ be a $2$-dimensional vector space, and consider the quotient stack
\begin{equation*}\mfG = \quotstack{\Hom(S,V)}{\GL(S)}.\end{equation*}
The open substack of $\mfG$ of rank $2$ homomorphisms is equivalent to the variety $\Gr(2,V)$. Furthermore, the vector bundle on $\mfG$ induced by the representation~$S$ of $\GL(S)$ restricts to the tautological subspace bundle on $\Gr(2,V)$.
The representation $\det S^\vee$ similarly corresponds to the bundle $\cO(1)$ on $\Gr(2,V)$.

Take the quotient stack
\begin{equation*}\mfX = \quotstack{\Hom(S,V)\oplus \Hom\big( V, \wedge^2 S\big)}{\GL(S)}.\end{equation*}
Representations of $\GL(S)$ also give vector bundles on $\mfX$, and its substacks. In particular, the representation~$S$ gives a bundle on~$\mfX$, and the notation~$S$ will also be used to denote this bundle. For brevity, we write the line bundle corresponding to the representation $\big(\det S^\vee\big)^{\otimes k}$ as $\cO(k)$, by analogy with the notation on $\Gr$.

Now, with $x\in \Hom(S,V)$ and $p\in \Hom\big( V, \wedge^2 S\big)$, let
\begin{equation*}
X_{\Gr} = \{ \rk x = 2 \} \qquad\text{and}\qquad
X_{\Pf} = \{ \rk p = 1 \}
\end{equation*}
be open substacks of $\mfX$. These have the structure of a vector bundle over
$\Gr(2,V)$ and an Artin stack
$\mfP$, respectively, where
\begin{equation*}
\mfP = \big[\Hom^{\rk=1}\big(V, \wedge^2 S\big)\big/\GL(S)\big].
\end{equation*}

We let $f$ denote a function on the stack $\mfX$, and use the same notation for its restriction to~substacks. As the particular form of $f$ is not used in this section, we defer its definition to the following Section~\ref{sec.relating}.

The following categories correspond to the collections of Proposition~\ref{prop.exc}.
\begin{defn}[window subcategories $\cW^{\underline{m}}$]\label{def.window} Let $\underline{m}=(m_0,m_1,m_2)$ be a non-decreasing sequ\-ence of integers such that $m_{l+1} \leq m_l + 1$. Then
\begin{equation}\label{eq:rectangularcollection}\WindowSetNew{\Sym^l S (m)}{2}{m_l-7,m_l}\end{equation}
gives a set of bundles on $\mfX$, and we write $\cW^{\underline{m}}$ for the full subcategories of $\D(\mfX)$ and $\D(\mfX,f)$ generated by this set.
\end{defn}

For brevity, we notate bundles as follows.
\begin{notn}
Let $S_{l,m}$ denote the bundle $\Sym^l S(m)$ on $\mfX$, or its restriction to a substack.
\end{notn}

\begin{rem}\label{rem.adsgens} In \cite{ADS} a set of generators $T_{l,m}= \Sym^l S^\vee(m)$ was used. Note that $T_{l,m}\cong S_{l,m+l}$, using that $S^\vee\cong S(1)$ because $S^\vee \cong S \otimes \det S^\vee$. The window subcategory used in \cite{ADS}, with generators given in~\eqref{eq:rectangularcollectionnew} above, is therefore the window $\cW^{\underline{m}}$ with $\underline{m}=(6,7,8)$ in our notation here.\end{rem}

We write embeddings of stacks as follows:
\begin{equation*}X_{\Gr} \stackrel{i_{\Gr}}{\longrightarrow} \mfX \stackrel{i_{\Pf}}{\longleftarrow} X_{\Pf}.
\end{equation*}

\begin{prop}\label{prop.equivs}
For $ \cW^{\underline{m}} \subset \D(\mfX,f)$ the derived functor
\begin{equation*}
i_{\Gr}^* \colon\ \cW^{\underline{m}} \to \D(X_{\Gr}, f)
\end{equation*}
is an equivalence, and the derived functor
\begin{equation*}
i_{\Pf}^* \colon\ \cW^{\underline{m}} \to \D(X_{\Pf}, f)
\end{equation*}
is fully faithful.
\end{prop}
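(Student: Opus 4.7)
The plan is to adapt the window argument of \cite[Section~4]{ADS}, which proves this statement for the single window $\underline{m}=(6,7,8)$, to the full family of $\cW^{\underline{m}}$. The content is a $\GL(S)$-equivariant weight computation on the two unstable loci
\begin{equation*}
Z_{\Gr} = \{\rk x < 2\} \subset \mfX, \qquad Z_{\Pf} = \{p = 0\} \subset \mfX,
\end{equation*}
and the window conditions $m_l - 7 < m \le m_l$ and $m_{l+1}\le m_l+1$ are tuned precisely so that this computation works.

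For fully faithfulness on either side, and for any pair of window generators $S_{l,m}, S_{\otherl,\otherm}$, the restriction map $\RHom_\mfX \to \RHom_{X_{\Gr}}$ (resp.\ $\RHom_{X_{\Pf}}$) fits in a triangle whose third term is local Ext supported on $Z_{\Gr}$ (resp.\ $Z_{\Pf}$). I would resolve $\cO_{Z_{\Gr}}$ by the Koszul complex for the regular section $\wedge^2 x$ of the equivariant bundle $\wedge^2 V \otimes \det S^\vee$, and $\cO_{Z_{\Pf}}$ by the Koszul complex for $p$ as a section of $V^\vee \otimes \det S$. This expresses each obstruction as a sum of $\GL(S)$-representations of the form $\Sym^{\furtherl}\!S \otimes (\det S^\vee)^{\furtherm}$, with $(\furtherl,\furtherm)$ determined by the Koszul degree and the twist $(\otherl-l,\otherm-m)$. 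Showing that the $\GL(S)$-invariants of all such pieces vanish is then a Borel--Weil--Bott check on the integers involved. Under the reindexing $T_{l,m}\cong S_{l,m+l}$ of Remark~\ref{rem.adsgens}, this check is a translate of the one carried out in \cite[Section~4]{ADS}, and the window conditions on $\underline{m}$ are exactly what keep all the relevant weights in the Bott-vanishing region simultaneously on both loci. Extending from $\D(\mfX)$ to $\D(\mfX,f)$ is formal, since the $\Z/2$-folded matrix factorization $\Hom$-complexes are computed from the same $\GL(S)$-equivariant data.

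For essential surjectivity of $i_{\Gr}^*$, the restrictions $i_{\Gr}^* S_{l,m}$ are pullbacks along the vector bundle projection $X_{\Gr}\to \Gr(2,V)$ of the full exceptional collection of Proposition~\ref{prop.exc}, hence generate $\D(X_{\Gr})$. They then generate $\D(X_{\Gr},f)$ via the Kn\"orrer periodicity equivalence $\D(X_{\Gr},f)\simeq \D(Y_{\Gr})$ of \cite{Shipman}, applicable because $Y_{\Gr}$ is the critical locus of $f|_{X_{\Gr}}$. No analogous claim is needed for $i_{\Pf}^*$, whose essential image is taken as the definition of $\Br^{\underline{m}}$ (Definition~\ref{def.bbrane}). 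I expect the main obstacle to be the uniform bookkeeping of the weight conditions across all window pairs and both unstable loci: the inequality $m_{l+1}\le m_l+1$ is tight, so care is needed to check that neither Bott-vanishing region is violated, but symmetry and translation should reduce the work to the boundary cases already handled in \cite{ADS}.
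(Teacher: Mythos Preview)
Your local-cohomology strategy has a concrete gap on the Grassmannian side. The section $\wedge^2 x$ of the rank-$21$ bundle $\wedge^2 V\otimes\det S^\vee$ is \emph{not} regular: its zero locus $Z_{\Gr}=\{\rk x\le 1\}$ is determinantal of codimension~$6$ in~$\mfX$, so the Koszul complex on $\wedge^2 x$ does not resolve $\cO_{Z_{\Gr}}$, and your reduction of the obstruction term to $\GL(S)$-invariants of Koszul pieces does not go through as written. (By contrast the Koszul resolution of $Z_{\Pf}=\{p=0\}$ is fine, since $p$ is a regular section of a rank-$7$ bundle.) One could try to repair this with an Eagon--Northcott complex or a Kempf--Ness stratification of~$Z_{\Gr}$, but note the paper's remark in Section~\ref{sec.struct} that the general window machinery of \cite{BFK,HL} does not directly produce the windows~$\cW^{\underline m}$ needed here. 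A second, smaller issue: the windows for varying~$\underline m$ are not all line-bundle translates of one another (compare Collections~\ref{cola}--\ref{cole}), so the Pfaffian-side check cannot literally be ``a translate'' of the single case in \cite[Section~4]{ADS}.

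The paper avoids local cohomology entirely. Since the complements of $X_{\Gr}$ and $X_{\Pf}$ in~$\mfX$ have codimension~$\ge 2$ and $\mfX$ is affine, $\Hom$s agree after restriction and there are no higher $\Ext$s on~$\mfX$; so fully faithfulness reduces to showing that no higher $\Ext$s appear \emph{after} restriction. These are computed directly by pushing down along the vector-bundle projections $X_{\Gr}\to\Gr$ and $X_{\Pf}\to\mathcal P$, giving for instance $\RHom_{X_{\Gr}}(S_{l,m},S_{l',m'})\cong\bigoplus_{n\ge 0}\RHom_{\Gr}(S_{l,m},S_{l',m'+n})$. The Grassmannian side then follows from a vanishing on~$\Gr$ (Corollary~\ref{cor.Kuznetsov_extended_new_mod}); the Pfaffian side needs a new combinatorial bound, Lemma~\ref{lem.induct}, adapted to the shape of~$\underline m$, followed by an induction that lands in standard cohomology vanishing on~$\P^6$. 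Your remarks on essential surjectivity and on the passage from $\D(-)$ to $\D(-,f)$ are broadly correct and match the paper's treatment.
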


\begin{proof} We first prove the analogous statement with $\D(X_{\Gr}, f)$ and~$\D(X_{\Pf}, f)$ replaced with $\D(X_{\Gr})$ and~$\D(X_{\Pf})$, before explaining how to deduce the proposition.

To prove fully faithfulness, we generalize the argument given in~\cite[Lemma~4.3]{ADS}. It suffices to check fully faithfulness on generators: take then two of the generators $S_{l,m}$ and~$S_{l',m'}$ of~$\cW^{\underline{m}} $. The $\Hom$s between these are equal to the $\Hom$s between their restrictions to $X_{\Gr}$ and $X_{\Pf}$, respectively, because the complements of the latter have codimensions at least~$2$. Furthermore, there are no higher $\Ext$s between them because they are vector bundles on an affine stack, so it suffices to check that they do not acquire any higher $\Ext$s after applying the functors $i^*$. Namely, we require the following:
\begin{gather}
\Ext^{>0}_{X_{\Gr}} \big(i_{\Gr}^*S_{l,m},\; i_{\Gr}^*S_{l', m'}\big) = 0, \label{eqn.extG}
\\
\Ext^{>0}_{X_{\Pf}} \big(i_{\Pf}^*S_{l,m},\; i_{\Pf}^*S_{l', m'}\big) = 0. \label{eqn.extP}
\end{gather}
These vanishings are proved in Sections~\ref{sec.vanG} and~\ref{sec.vanP}, respectively.

We then claim that $i_{\Gr}^*$ is essentially surjective. For this, note that the generators $S_{l,m}$, when considered as sheaves on $\Gr(2,V)$ give generators for the derived category by Proposition~\ref{prop.exc}. Essential surjectivity then follows by a general argument, exactly as in~\cite[Lemma~4.6]{ADS}.

To complete the proof of the proposition, we repeat the arguments in \cite[Proposition~4.9, Lemma~4.10]{ADS}. The important point here is that morphisms in the categories $\D(X_{\Gr}, f)$ and $\D(X_{\Pf}, f)$ are related to those in the categories $\D(X_{\Gr})$ and~$\D(X_{\Pf})$ by certain spectral sequ\-ences.
\end{proof}

Using the above proposition we have the following.

\begin{defn}[B-brane categories]\label{def.bbrane}
We define a subcategory
\begin{equation*}\Br^{\underline{m}}(X_{\Pf}, f) \subset \D(X_{\Pf}, f) \end{equation*}
to be the image of $\cW^{\underline{m}} \subset \D(\mfX, f)$ under $i_{\Pf}^*$.
\end{defn}

\begin{rem} In other words, $\Br^{\underline{m}}(X_{\Pf}, f)$ is the full subcategory of $\D(X_{\Pf}, f) $ generated by the bundles given by~\eqref{eq:rectangularcollection}.\end{rem}

\begin{defn}[window equivalences]\label{def.windowequiv}
We write
\begin{equation*}
\Psi_{\cW}^{\underline{m}} = i_{\Pf}^* \circ (i_{\Gr}^*)^{-1} \colon\ \D(X_{\Gr},f) \to \cW^{\underline{m}} \to \Br^{\underline{m}}(X_{\Pf},f).
\end{equation*}
\end{defn}

\subsection{Grassmannian side}\label{sec.vanG} I start with some vanishing results on $\Gr(2,V)$, before proving the vanishing~\eqref{eqn.extG} on $X_{\Gr}$. I\opt{s}{~}\opt{a}{ }recall the following, again writing $T_{l,m}= \Sym^l S^\vee(m)$.

\begin{lem}[{\cite[Lemma 4.5]{ADS}}]\label{lem.Kuznetsov_extended_new}
Let $\Gr=\Gr(2,V)$, with $\dim V=n$ odd. If $0 \leq l, l' \leq \frac{1}{2}n-1$ and~$m' \geq m$ then we have
\begin{equation*}
\Ext^{>0}_{\Gr} \big(T_{l,m},\, T_{l',m'} \big) = 0.
\end{equation*}
\end{lem}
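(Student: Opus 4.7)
The plan is to reduce the $\Ext$-vanishing on $\Gr = \Gr(2,V)$ to a Borel--Weil--Bott calculation. First I rewrite
$\Ext^i_{\Gr}(T_{l,m}, T_{l',m'}) = H^i\bigl(\Gr,\, T_{l,m}^\vee \otimes T_{l',m'}\bigr).$
Since $S$ has rank $2$, we have $S^\vee \cong S \otimes \cO(1)$, hence $\Sym^l S \cong \Sym^l S^\vee \otimes \cO(-l)$ and
\[
T_{l,m}^\vee \otimes T_{l',m'} \;\cong\; \Sym^l S^\vee \otimes \Sym^{l'} S^\vee \otimes \cO(m'-m-l).
\]
Applying the rank-$2$ Clebsch--Gordan rule
$\Sym^l S^\vee \otimes \Sym^{l'} S^\vee \cong \bigoplus_{k=0}^{\min(l,l')} \Sym^{l+l'-2k} S^\vee \otimes \cO(k)$
expresses the right-hand side as a direct sum of line-twisted symmetric powers $\Sym^j S^\vee \otimes \cO(c)$ with $j = l+l'-2k$ and $c = k + (m'-m) - l$.

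Next I apply Borel--Weil--Bott to each summand. Writing $n = \dim V$, the bundle $\Sym^j S^\vee \otimes \cO(c)$ corresponds to the weight $(j+c,\, c,\, 0, \ldots, 0)$, and its cohomology is governed by the $\rho$-shifted sequence
$\sigma = (j+c+n-1,\; c+n-2,\; n-3,\; \ldots,\; 1,\; 0).$
Two rules suffice: if $\sigma$ is strictly decreasing (equivalently $c \geq 0$), the cohomology is concentrated in degree $0$; if any two entries of $\sigma$ coincide, the cohomology vanishes in all degrees. So to prove $\Ext^{>0}_{\Gr} = 0$ it is enough to show that in every summand either $c \geq 0$, or else $c + n - 2$ coincides with one of the quotient entries $\{0, 1, \ldots, n-3\}$.

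The numerical heart of the argument uses the Lefschetz hypothesis: with $n$ odd and $l, l' \leq \tfrac{1}{2}n - 1$, integrality forces $l, l' \leq \tfrac{n-3}{2}$, and the hypothesis $m' \geq m$ together with $k \geq 0$ gives
\[
c \;=\; k + (m'-m) - l \;\geq\; -l \;\geq\; -\tfrac{n-3}{2}.
\]
Hence whenever $c < 0$ we have
$c + n - 2 \in \bigl[\tfrac{n-1}{2},\; n-3\bigr] \subseteq \{0,1,\ldots,n-3\},$
producing the desired repetition of $\sigma$ with a quotient entry. Summands with $c \geq 0$ contribute only to $H^0$, and the rest are acyclic; either way $\Ext^{>0}$ vanishes.

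I expect the main obstacle to be verifying that the Lefschetz bound is exactly sharp for Borel--Weil--Bott cancellation: one has to confirm that $c$ never drops below $-\tfrac{n-3}{2}$, which is precisely what keeps $c + n - 2$ inside the range of quotient weights rather than slipping into the deep negative regime where Bott's sorting algorithm would produce genuine higher cohomology instead of cancellation.
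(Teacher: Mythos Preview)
The paper does not supply its own proof of this lemma: it is quoted verbatim from \cite[Lemma~4.5]{ADS} and used as a black box. Your Borel--Weil--Bott argument is correct and is the expected proof; presumably it is essentially what appears in the cited reference.

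A couple of small remarks on presentation. First, you might make explicit that $j=l+l'-2k\geq 0$ (since $k\leq\min(l,l')$), so the first two entries of $\sigma$ are automatically in strict order and the dominance of $\sigma$ is genuinely equivalent to $c\geq 0$, as you claim parenthetically. Second, your final paragraph reads more like a note-to-self about where the argument could go wrong than a step in the proof; once you have shown $c\geq -\tfrac{n-3}{2}$, the verification that $c+n-2\in\{0,\dots,n-3\}$ whenever $c<0$ is complete, and there is no ``deep negative regime'' to worry about here since the first entry $j+c+n-1$ is irrelevant once the second entry already forces a repeat. You can simply delete that paragraph or fold its content into the preceding one.
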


We quickly obtain the following, where again $S_{l,m}= \Sym^l S(m)$.

\begin{cor} \label{cor.Kuznetsov_extended_new_mod}
In the setting of Lemma~$\ref{lem.Kuznetsov_extended_new}$ above, so that in particular we have~$m' \geq m$,
\begin{equation*}
\Ext^{>0}_{\Gr} \big(S_{l,m},\, S_{l',m'} \big) = 0\qquad\text{and}\qquad \Ext^{>0}_{\Gr} \big(S_{l,m},\, S_{l',m'+l'-l} \big)=0.
\end{equation*}
\end{cor}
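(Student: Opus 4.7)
The plan is to reduce both vanishings to Lemma~\ref{lem.Kuznetsov_extended_new} by rewriting the $S_{l,m}$ in terms of the $T_{l,m}$ and tracking the twists carefully. Recall from Remark~\ref{rem.adsgens} that $S^\vee \cong S(1)$, which gives the key identification
\[
T_{l,m} \;=\; \Sym^l S^\vee (m) \;\cong\; \Sym^l S \otimes \cO(m+l) \;=\; S_{l,m+l},
\]
and hence $S_{l,m} \cong T_{l,m-l}$.

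For the second vanishing, I would simply substitute: $S_{l,m} \cong T_{l,m-l}$ and $S_{l',m'+l'-l} \cong T_{l',m'-l}$, so that
\[
\Ext^{>0}_{\Gr}\bigl(S_{l,m},\, S_{l',m'+l'-l}\bigr) \;=\; \Ext^{>0}_{\Gr}\bigl(T_{l,m-l},\, T_{l',m'-l}\bigr).
\]
The hypothesis $m'\geq m$ gives $m'-l \geq m-l$, and the range condition on $l, l'$ is unchanged, so Lemma~\ref{lem.Kuznetsov_extended_new} applies.

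For the first vanishing, direct substitution does not work, because $S_{l,m}\cong T_{l,m-l}$ and $S_{l',m'}\cong T_{l',m'-l'}$ only give the inequality $m'-l' \geq m-l$ when $m'-m \geq l'-l$, which is not assumed. Instead I would compute both sides as sheaf cohomology, using that $(\Sym^l S)^\vee = \Sym^l S^\vee$:
\[
\Ext^{i}_{\Gr}\bigl(S_{l,m},\, S_{l',m'}\bigr) \;=\; H^i\bigl(\Gr,\; \Sym^l S^\vee \otimes \Sym^{l'} S \otimes \cO(m'-m)\bigr).
\]
The same computation for $\Ext^{i}_{\Gr}(T_{l',m},\, T_{l,m'})$ yields the identical expression (the roles of $l$ and $l'$ are interchanged in which factor is dualized, but the tensor product is the same). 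Hence
\[
\Ext^{>0}_{\Gr}\bigl(S_{l,m},\, S_{l',m'}\bigr) \;=\; \Ext^{>0}_{\Gr}\bigl(T_{l',m},\, T_{l,m'}\bigr),
\]
and now the hypothesis $m'\geq m$ makes Lemma~\ref{lem.Kuznetsov_extended_new} immediately applicable (the range conditions on $l$ and $l'$ are symmetric, so swapping them is harmless).

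There is no real obstacle here; the only point requiring care is bookkeeping of the twist by $\det S^\vee$ in the passage between $S$ and $S^\vee$, and noticing that the first vanishing needs the cohomological reformulation rather than a direct substitution, since the naive $S\leftrightarrow T$ dictionary shifts the twist by $l-l'$ in a direction that is not controlled by $m'\geq m$ alone.
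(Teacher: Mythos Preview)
Your proposal is correct and follows essentially the same approach as the paper: for the second vanishing the paper also uses the identification $T_{l,m}\cong S_{l,m+l}$ directly, and for the first it ``exchanges $l$ and $l'$'' and invokes local freeness, which amounts precisely to your cohomological reformulation identifying $\Ext^{>0}_{\Gr}(S_{l,m},S_{l',m'})$ with $\Ext^{>0}_{\Gr}(T_{l',m},T_{l,m'})$. Your explanation of why the first vanishing requires this swap rather than naive substitution is a useful elaboration that the paper leaves implicit.
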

\begin{proof} For the first, we exchange $l$ and $l'$, and note that the result is isomorphic to the $\Ext$-group in Lemma~\ref{lem.Kuznetsov_extended_new} above, using local freeness of the $\Sym$s. For the second, we use that $T_{l,m}\cong S_{l,m+l}$, and the local freeness of $\cO(1)$.
\end{proof}
\begin{rem}\label{rem.diags} In terms of the following picture, this corollary says that there are no higher $\Ext$s on $\Gr(2,V)$ from an object on the lines pictured, say~$L$, to any object on a parallel line $L'$ lying to the right of~$L$:
\def\boxpad{0.5}
\def\boxpadh{2.5*\boxpad}
\def\boxpadv{\boxpad}
\def\shift{0.1}
\def\midshift{\shift}
\def\lowshift{6*\shift}
\def\topshift{-6*\shift}
\[
\begin{tikzpicture}[xscale=1.3,yscale=0.65] 

\draw[gray] (2-\lowshift,0-\boxpadv) -- (0-\topshift,2+\boxpadv);
\draw[gray] (1-\lowshift,0-\boxpadv) -- (-1-\topshift,2+\boxpadv);
\draw[gray] (0-\lowshift,0-\boxpadv) -- (-2-\topshift,2+\boxpadv);

\draw[gray] (1-\lowshift,0-\boxpadv) -- (1-\topshift,2+\boxpadv);
\draw[gray] (0-\lowshift,0-\boxpadv) -- (0-\topshift,2+\boxpadv);
\draw[gray] (-1-\lowshift,0-\boxpadv) -- (-1-\topshift,2+\boxpadv);

\node at (-2-\lowshift,0) {$\cdots$};
\node at (-3-\topshift,2) {$\cdots$};

\node at (-1-\lowshift,0) {$\smallGLSrepnewabbr{0}{-1}$};

\node at (0-\lowshift,0) {$\smallGLSrepnewabbr{0}{0}$};
\node at (-1-\midshift,1) {$\smallGLSrepnewabbr{1}{0}$};
\node at (-2-\topshift,2) {$\smallGLSrepnewabbr{2}{0}$};

\node at (1-\lowshift,0) {$\smallGLSrepnewabbr{0}{1}$};
\node at (0-\midshift,1) {$\smallGLSrepnewabbr{1}{1}$};
\node at (-1-\topshift,2) {$\smallGLSrepnewabbr{2}{1}$};

\node at (2-\lowshift,0) {$\smallGLSrepnewabbr{0}{2}$};
\node at (1-\midshift,1) {$\smallGLSrepnewabbr{1}{2}$};
\node at (0-\topshift,2) {$\smallGLSrepnewabbr{2}{2}$};

\node at (1-\topshift,2) {$\smallGLSrepnewabbr{2}{3}$};

\node at (3-\lowshift,0) {$\cdots$};
\node at (2-\topshift,2) {$\cdots$};

\end{tikzpicture}
\]
\end{rem}

I now show the vanishing \eqref{eqn.extG}. By a standard calculation using the structure of $X_{\Gr}$ as a~vector bundle over $\Gr$, we have
\begin{gather*}
\RHom_{X_{\Gr}} \big(i_{\Gr}^*S_{l,m},\, i_{\Gr}^*S_{l',m'} \big) \cong \bigoplus_{n\geq 0} \RHom_{\Gr} \big( S_{l,m}, \, S_{l',m'} \otimes \Sym^n \cO(1)^{\oplus 7} \big).
\end{gather*}
By expanding the $\Sym^n$ piece into irreducibles we see that this splits into summands
\begin{equation*}
\RHom_{\Gr} \big(S_{l,m},\, S_{l',m'+n} \big),
\end{equation*}
where $S_{l,m}, S_{l',m'} \in \cW^{\underline{k}} $ and $n\geq 0$. It therefore suffices to know that there are no higher $\Ext$s from the generators $S_{l,m}$ of~$\cW^{\underline{m}} $ to other generators $S_{l',m'}$ of~$\cW^{\underline{m}}$ or to sheaves $S_{l',m'+n}$ ``to their right''. But this is clear from the form of the exceptional collections in Section~\ref{sec.cols}, combined with Corollary~\ref{cor.Kuznetsov_extended_new_mod} above.

\subsection{Pfaffian side}\label{sec.vanP}
We now show the vanishing \eqref{eqn.extP}. Similarly to the above, by a standard calculation we have
\begin{gather*}
\RHom_{X_{\Pf}} \big(i_{\Pf}^*S_{l,m},\,i_{\Pf}^*S_{l',m'}\big)\cong \bigoplus_{n\geq 0} \RHom_{\mathcal{P}}\big(S_{l,m}, \,S_{l', m'} \otimes \Sym^n S^{\oplus 7} \big).
\end{gather*}
We check the following very straightforward bound. This will give the vanishing for the $\Sym^0$ piece: the vanishing for the $\Sym^n $ piece with $n>0$ will then follow by induction.

\begin{lem}\label{lem.induct}
For two generators
$S_{l,m}$ and $S_{l', m'} $ of $\cW^{\underline{m}} $ we have
\begin{equation*}
m' - m < \max(l'-l,0)+7.
\end{equation*}
\end{lem}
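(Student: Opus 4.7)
The plan is to unpack the definition of the window $\cW^{\underline{m}}$ and then reduce to a direct comparison of integer inequalities. By Definition~\ref{def.window}, a generator $S_{l,m}$ of $\cW^{\underline{m}}$ has $l\in\{0,1,2\}$ and $m\in(m_l-7,\,m_l]$, so in particular
\[
m_l - 6 \leq m \leq m_l, \qquad m_{l'} - 6 \leq m' \leq m_{l'}.
\]
Subtracting gives $m' - m \leq m_{l'} - m_l + 6$, so it suffices to show that $m_{l'} - m_l \leq \max(l'-l,0)$.

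First I would handle the case $l' \geq l$. By the standing hypothesis $m_{k+1} \leq m_k + 1$, iterating from $k = l$ up to $k = l'-1$ yields $m_{l'} \leq m_l + (l'-l)$, so $m_{l'} - m_l \leq l' - l = \max(l'-l,0)$. Substituting back, $m' - m \leq (l'-l) + 6 < (l'-l) + 7$, as required.

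Next I would handle the case $l' < l$. Since $\underline{m}$ is non-decreasing, $m_{l'} \leq m_l$, giving $m_{l'} - m_l \leq 0 = \max(l'-l,0)$. Hence $m' - m \leq 6 < 7$, again as required.

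I do not expect any real obstacle here: the lemma is a bookkeeping exercise translating the combinatorial constraints on $\underline{m}$ (non-decreasing with unit steps) and on the window range ($m \in (m_l - 7, m_l]$) into the claimed inequality. The strict inequality in the conclusion comes from the fact that the window has width exactly $7$ while the bound on $m_{l'} - m_l$ costs at most $l'-l$ (or $0$), leaving one unit of slack.
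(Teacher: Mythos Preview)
Your proof is correct and follows essentially the same route as the paper: bound $m'-m$ via $m_{l'}-m_l$, then bound the latter by $\max(l'-l,0)$ using the non-decreasing-with-unit-steps condition on $\underline{m}$. The only cosmetic difference is that you invoke integrality of $m$ to write $m'-m \leq (m_{l'}-m_l)+6$ and then use $6<7$, whereas the paper uses the strict lower bound $m>m_l-7$ from the half-open interval directly to obtain $m'-m < (m_{l'}-m_l)+7$.
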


\begin{proof} By assumption
\[
m\in (m_l-7,m_l]\qquad\text{and}\qquad
m'\in (m_{l'}-7,m_{l'}].
\]
Notice then that
\begin{gather*}
m' - m < m_{l'} - (m_l - 7) = (m_{l'} - m_l) + 7.
\end{gather*}
Recall that the $m_l$ form a non-decreasing sequence with $m_{l+1}\leq m_l + 1$. If $l' \geq l$ then \begin{equation*}
m_{l'} - m_l \leq l' - l,
\end{equation*}
and otherwise $m_{l'} - m_l \leq 0$, hence the claim.
\end{proof}

\begin{rem}
The collections $\cW^{(0,0,0)}$ and $\cW^{(-2,-1,0)}$ (given as Collections~\ref{cola} and~\ref{cole} in~Section~\ref{sec.cols}) show that, amongst such bounds which are uniform for all the $\cW^{\underline{m}}$, the bound of Lemma~\ref{lem.induct} is the best possible.
\end{rem}

For our induction, we now observe that
\begin{align}\label{eq.induct_identity}
S_{l,m} \otimes S & = \Sym^l S(m) \otimes S \nonumber \\
& \cong \big( \Sym^{l+1} S \,\oplus\, \Sym^l S \otimes \wedge^2 S \,\big)(m) \nonumber \\
& \cong \Sym^{l+1} S(m) \,\oplus\, \Sym^l S(m-1) \nonumber \\
& = S_{l+1,m} \oplus S_{l,m-1}.
\end{align}
Now suppose we use this repeatedly to expand the $\Sym^n$ piece, namely
\begin{equation*}
\RHom_{\mathcal{P}}\big(S_{l,m},S_{l', m'} \otimes \Sym^n S^{\oplus 7} \big).
\end{equation*}
I claim we get summands
\begin{equation}\label{eqn.summand}
\RHom_{\mathcal{P}}\big(S_{l,m},S_{l'', m''} \big)
\end{equation} for which the inequality
\begin{equation}\label{eq.induct_summand}
m'' - m < \max(l''-l,0)+7.
\end{equation}
is satisfied. This follows by induction: the base case $n=0$ is shown in Lemma~\ref{lem.induct}, and the inequality still holds when we increment~$l''$ or decrement~$m''$ each time we apply \eqref{eq.induct_identity}.


Let us now calculate the cohomology of the summands
\eqref{eqn.summand}. First note
\begin{align*}\label{eq.expansion}
\RsHom_{\mathcal{P}}\big(S_{l,m},S_{l'', m''}\big) &\cong \Sym^l S^\vee(-m) \otimes \Sym^{l''} S (m'') \\
& \cong \Sym^l S(l-m) \otimes \Sym^{l''} S (m'')
\\
& \cong \big(\Sym^l S \otimes \Sym^{l''} S \big)(l-m+m'').
\end{align*}
Applying $\RDerived\Gamma_{\mathcal{P}}$ to this, we will obtain $\RHom_{\mathcal{P}} \cong \RDerived\Gamma_{\mathcal{P}} \circ \RsHom_{\mathcal{P}}$. Observe then that there is a~morphism of stacks $\delta\colon \mathcal{P}\to \P^6$ induced by $\det\colon\GL(S) \to \C^*$, and we have
$\RDerived\Gamma_{\mathcal{P}} \cong \RDerived\Gamma_{\P^6} \circ \delta_*.$
The functor $\delta_*$ takes $\ker(\det)$-invariants where $\ker(\det) = \SL(S)$. Expanding $\Sym^l S \otimes \Sym^{l''}S $ into irreducibles, the only sheaf that survives this is $(\wedge^2 S)^{\otimes l}\cong\cO(-l)$ in the case that $l=l''$.

We thence deduce that the only contribution to the cohomology of \eqref{eqn.summand} comes from
\opt{a}{$\RDerived\Gamma_{\mathcal{P}} \cO(-m+m'')$}
\opt{s}{\begin{equation*}\RDerived\Gamma_{\mathcal{P}} \cO(-m+m'')\end{equation*}}
in the case $l=l''$ and that, setting $n=m'' - m$, this contribution is
\begin{equation}\label{eq.pushdown}\RDerived\Gamma_{\P^6} \delta_* \cO(n)
\cong \RDerived\Gamma_{\P^6} \cO_{\P^6}(-n).
\end{equation}
But \eqref{eq.induct_summand} implies that if $l=l''$ then $n< 7$, so the required vanishing of higher $\Ext$s in \eqref{eqn.extP} follows from standard cohomology vanishing on $\P^6$.

\begin{rem}\label{rem.sign}
The minus sign in~\eqref{eq.pushdown} appears because the bundle $\cO(1)$ on
\begin{equation*}
\mathcal{P} = \quotstack{\Hom\big( V, \wedge^2 S\big) - \{0\}}{\GL(S)}
\end{equation*}
corresponds to the representation $\wedge^2 S^\vee$ by the definitions at the beginning of this section. The bundle $\cO_{\P^6}(1)$, on the other hand, corresponds to $\wedge^2 S$.
\end{rem}

\begin{rem}
The last part of the argument above, the calculation of the cohomology of the summands~\eqref{eqn.summand}, parallels the proof of~\cite[Lemma~4.3]{ADS}. We include it here to explain some details, and for convenience.\end{rem}

\section[Equivalences for Calabi--Yau 3-folds]{Equivalences for Calabi--Yau $\boldsymbol 3$-folds}\label{sec.relating}

We now show that the categories $\D(X_{\Gr},f)$ and $\Br^{\underline{m}}(X_{\Pf},f)$ are equivalent, respectively, to the derived categories of our Calabi--Yau $3$-folds $Y_{\Gr}$ and~$Y_{\Pf}$, completing the proof of Theorem~\ref{mainthm.equiv}.

In this section we require the definition of the function $f$ on $\mfX$ from~\cite{ADS}, as follows. Take a~surjective map $A\colon \wedge^2 V \to V$ such that the dual subspaces $\Pi$ and $\Pi^\circ$ used to construct the $3$-folds in Section~\ref{sec.threefolds} are given by
\[\Pi = \ker A^\vee \qquad\text{and}\qquad \Pi^\circ = \Im A.\]
Recalling that
\begin{equation*}
\mfX = \quotstack{\Hom(S,V)\oplus \Hom\big(V, \wedge^2 S\big)}{\GL(S)}
\end{equation*}
with $x\in \Hom(S,V)$ and $p\in \Hom\big( V, \wedge^2 S\big)$ take a function on $\mfX$ defined by
\[
f(x,p) = p \circ A \circ \wedge^2 x
\]
via the canonical isomorphism $\Hom\big({\wedge}^2 S, \wedge^2 S\big)\cong \mathbb{C}.$

In addition to the data $(\mfX,f)$, the Landau--Ginzburg model includes the data of a $\mathbb{C}^*$-action on $\mfX$ for which $f$ has weight~$2$. For this, we let $\mathbb{C}^*$ act with weight~$0$ on~$x$, and weight~$2$ on~$p$. Note that this action has been suppressed in our notation so far. Following physics terminology, it is sometimes know as the ``R-charge''. For more details, see~\cite[Section~2.1]{ADS}.

\subsection{Grassmannian side}

Recall that $X_{\Gr}$ is by definition the Artin stack
\[ X_{\Gr} = \quotstackBig{ \setcondsbig{(x,p) \in \Hom(S,V) \oplus \Hom\big(V,\wedge^2 S\big)}{\rk(x)=2}}{\GL(S)}, \]
which is isomorphic to the total space of the vector bundle $\cO(-1)^{\oplus 7}$ over the Grassmannian~$\Gr(2, V)$. Denote the projection morphism for this bundle by~$\pi$. The Calabi--Yau 3-fold $Y_{\Gr}$ defined in Section~\ref{sec.threefolds} is the zero locus of a transverse section of $\cO(1)^{\oplus 7}$ given by
\begin{equation*} s = A\circ \wedge^2 x.\end{equation*}
Then we have $f=sp$, where
$p$ is the tautological section of $\pi^*\cO(-1)^{\oplus 7}$. The $\mathbb{C}^*$-action preserves each fibre of the bundle, and acts with weight~$2$ on those fibres.

Given this geometric situation, an equivalence from $\D(Y_{\Gr})$ to $\D(X_{\Gr},f)$ follows from a~version of Kn\"orrer periodicity, as follows. Let $X_{\Gr}|_{Y_{\Gr}}$ denote the base change of the bundle~$X_G$ over~$G$ to the base~$Y_G$. Then we have
\begin{equation*}
Y_{\Gr} \stackrel{\pi}{\longleftarrow} X_{\Gr}|_{Y_{\Gr}} \stackrel{k}{\longrightarrow} X_{\Gr},
\end{equation*}
where $k$ is the inclusion.

\begin{defn}\label{def.equivGr}
We define an equivalence $\Psi_{\Gr}$ by the composition
\begin{equation*}
\Psi_{\Gr} \colon \ \D(Y_{\Gr}) \stackrel{\pi^*}{\longrightarrow} \D(X_{\Gr}|_{Y_{\Gr}}) \stackrel{k_*}{\longrightarrow} \D(X_{\Gr}, f).
\end{equation*}
\end{defn}

This is an equivalence by a result of Shipman~\cite[Theorem~3.4]{Shipman}.

\subsection{Calculations} Later, in Section~\ref{sec twists}, I study certain bundles on~$Y_{\Gr}$ to prove Theorem~\ref{keythm.twists}: for use there, in this subsection I calculate the images of these objects under $\Psi_{\Gr}$. I first give the method in a simpler case, before applying it to $Y_{\Gr}$.

\begin{eg} Consider a line bundle $L$ over a variety $B$, and a subvariety $Y \subset B$ cut out by a transverse section $s$ of $L$. Let $X$ be the total space of~$L^\vee$, with projection $\pi$, and $p$ the tautological section of $\pi^* L^\vee$. We make the setup $\C^*$-equivariant so that $s$ and $p$ have weights~$0$ and~$2$, respectively. Take notation as follows, where $X|_Y$ is the total space of the base change of~the bundle $L$ over $B$ to $Y$, and $j$ is the inclusion of the zero section $B$ in $X$:
\begin{equation*}
\begin{tikzpicture}[scale=1.5,xscale=1.1]
\node (A) at (0,0) {$X$};
\node (B) at (-1,0) {$X|_Y$};
\node (C) at (0,-1) {$B$};
\node (D) at (-1,-1) {$Y$};
\draw[right hook->] (B) -- node[above] {$ \scriptstyle k$} (A);
\draw[right hook->] (D) -- node[above] {$ \scriptstyle k $} (C);
\draw[transform canvas={xshift=-.4ex},<-] (C) -- node[left] {$\scriptstyle \pi$} (A);
\draw[transform canvas={xshift=.4ex},<-left hook] (A) -- node[right] {$\scriptstyle j $} (C);
\draw[<-] (D) -- node[left] {$\scriptstyle \pi$} (B);
\end{tikzpicture}
\end{equation*}
Then in $\D(X,f)$, where $f=sp$, we have an object $\mathcal{K}$ given by
\begin{equation*}
\begin{tikzpicture}[scale=2]
\node (A) at (0,0) {$\pi^*L^\vee[-1]$};
\node (B) at (1,0) {$\cO_X$,};
\draw[transform canvas={yshift=.4ex},->] (A) -- node[above] {$ \scriptstyle s$} (B);
\draw[transform canvas={yshift=-.4ex},->] (B) -- node[below] {$ \scriptstyle p$} (A);
\end{tikzpicture}
\end{equation*}
where the shift $[-1]$ denotes a change in $\C^*$-weight.
Forgetting the right- and left-moving morphisms respectively in $\mathcal{K}$ gives Koszul resolutions for $\C^*$-equivariant sheaves
\begin{equation}\label{eqn.koszuls}
\pi^* k_* \cO_Y \qquad\text{and}\qquad j_* \cO_B \otimes \pi^* L^\vee [-1].
\end{equation}
Note that these determine objects of $\D(X,f)$ because they are supported on the zero locus of $f$, namely $X|_Y \cup B$. Indeed, they may be written as pushforwards from this locus using isomorphisms
\begin{equation*}
\pi^* k_* \cO_Y \cong k_* \pi^* \cO_Y \qquad\text{and}\qquad
 j_* \cO_B \otimes \pi^* L^\vee \cong j_* \big(\cO_B \otimes j^*\pi^* L^\vee\big) \cong j_* \big(\cO_B \otimes L^\vee\big)\end{equation*}
which follow by flat base change, and the projection formula for $j$, respectively. Now, by a~standard argument with matrix factorizations, both the objects of \eqref{eqn.koszuls} are isomorphic to $\mathcal{K}$ in~$\D(X,f)$: see for instance \cite[proof of Lemma~3.2]{Shipman}. Putting all this together, we obtain an~isomorphism in $\D(X,f)$ as follows.
\begin{equation*}
k_* \pi^* \cO_Y \cong j_* (\cO_B \otimes L^\vee)[-1].
\end{equation*}\end{eg}

We generalize the method of this example to deduce the following.
\begin{prop}\label{prop.images} We have that
\begin{align*}
\Psi_{\Gr}\colon\ \cO_{Y_{\Gr}} & \mapsto j_* \cO_{\Gr}(-7)[-7] \cong j_! \cO_{\Gr},
\end{align*}
where $j$ is the inclusion of the zero section of $X_{\Gr}$, and similarly with occurences of~$\cO$ replaced with $S$ or \,$\Sym^2 S$.
\end{prop}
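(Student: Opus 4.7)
The plan is to generalize the Koszul-style matrix factorization construction of the preceding example from a rank-one bundle to the rank-seven bundle $E = \cO_{\Gr}(1)^{\oplus 7}$ on $\Gr = \Gr(2,V)$. Here $X_{\Gr}$ is the total space of $E^\vee$, the section $s = A \circ \wedge^2 x$ is the regular section of $E$ cutting out $Y_{\Gr}$, and the tautological section of $\pi^* E^\vee$ is $p$, so that $f = \langle s, p \rangle = sp$.

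First I would build a matrix factorization $\mathcal{K}$ of $f$ on $X_{\Gr}$ whose underlying $\Z/2$-graded module is $\wedge^\bullet \pi^* E^\vee$, with differential $d = \iota_s + p \wedge (-)$: the first term lowers wedge degree by one and the second raises it, with $\C^*$-weights arranged as in the example so that $d$ has the correct weight, and one checks $d^2 = \langle s,p\rangle = f$ on the nose. Forgetting the raising differential $p \wedge (-)$ leaves the classical Koszul complex on $\wedge^\bullet \pi^* E^\vee$ for the regular section $\pi^* s$ of $\pi^* E$, which resolves $k_* \pi^* \cO_{Y_{\Gr}} = \Psi_{\Gr}(\cO_{Y_{\Gr}})$; forgetting the lowering differential $\iota_s$ leaves the co-Koszul complex $(\wedge^\bullet \pi^* E^\vee, p \wedge (-))$, and the duality $\wedge^k E^\vee \cong \wedge^{7-k} E \otimes \det E^\vee$ identifies this with a Koszul resolution (by contraction with $p$) of the zero section $\Gr$ tensored with $\pi^* \det E^\vee = \pi^* \cO_{\Gr}(-7)$. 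Its cohomology is therefore concentrated in top degree and equals $j_* \cO_{\Gr}(-7)$, so the complex represents $j_* \cO_{\Gr}(-7)[-7]$ in $\D(X_{\Gr}, f)$. By the standard matrix factorization argument recalled in the example (cf.\ \cite[Lemma~3.2]{Shipman}), both complexes extend canonically to matrix factorizations isomorphic to $\mathcal{K}$, which gives the first isomorphism. The identification with $j_! \cO_{\Gr}$ then follows from the convention $j_! F = j_*\big(F \otimes \det N_{\Gr/X_{\Gr}}\big)[-7]$ together with $N_{\Gr/X_{\Gr}} = E^\vee = \cO_{\Gr}(-1)^{\oplus 7}$.

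For $F = S$ or $\Sym^2 S$, I would simply tensor $\mathcal{K}$ throughout by the locally free pullback $\pi^* F$. The two resolutions then compute, on one side, $k_* \pi^* (F|_{Y_{\Gr}}) \cong (k_* \pi^* \cO_{Y_{\Gr}}) \otimes \pi^* F$ (by flat base change and the projection formula for $k$), and, on the other, $j_* \cO_{\Gr}(-7) \otimes \pi^* F \cong j_*(F(-7))$ (by the projection formula for $j$, using $j^* \pi^* = \mathrm{id}$), which yields the asserted images. The main obstacle I anticipate is the careful bookkeeping of the $\C^*$-weights (R-charges) on the modules of $\mathcal{K}$, together with making the duality $\wedge^k E^\vee \cong \wedge^{7-k} E \otimes \det E^\vee$ precise, as these are the mechanisms producing the cohomological shift $[-7]$ and the determinantal twist by $\cO_{\Gr}(-7)$ in the final answer.
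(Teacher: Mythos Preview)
Your proposal is correct and follows essentially the same route as the paper: the paper likewise generalizes the preceding rank-one example by taking the Koszul-type matrix factorization $\mathcal{K}$ on $\wedge^\bullet \pi^* F^\vee$ with $F=\cO(1)^{\oplus 7}$, reads off the two resolutions to obtain $k_*\pi^*\cO_{Y_{\Gr}}\cong j_*(\cO_{\Gr}\otimes\det F^\vee)[-7]$, identifies this with $j_!\cO_{\Gr}$ via $\omega_j=\det F^\vee$ and $\dim j=-7$, and then handles $S$ and $\Sym^2 S$ by tensoring $\mathcal{K}$. Your extra remark making the duality $\wedge^k E^\vee \cong \wedge^{7-k} E \otimes \det E^\vee$ explicit is a helpful elaboration of a step the paper leaves implicit.
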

\begin{proof}
Take $F =\cO(1)^{\oplus 7}$, a bundle on $\Gr(2,V)$ of rank $r=\dim V=7$, and set notation for morphisms as follows:
\begin{equation*}
\begin{tikzpicture}[scale=1.5,xscale=1.3]
\node (A) at (0,0) {$X_{\Gr}$};
\node (B) at (-1,0) {$X_{\Gr}|_{Y_{\Gr}}$};
\node (C) at (0,-1) {$\Gr$};
\node (D) at (-1,-1) {$Y_{\Gr}$};
\draw[right hook->] (B) -- node[above] {$ \scriptstyle k$} (A);
\draw[right hook->] (D) -- node[above] {} (C);
\draw[transform canvas={xshift=-.4ex},<-] (C) -- node[left] {$\scriptstyle \pi$} (A);
\draw[transform canvas={xshift=.4ex},<-left hook] (A) -- node[right] {$\scriptstyle j $} (C);
\draw[<-] (D) -- node[left] {$\scriptstyle \pi$} (B);
\end{tikzpicture}
\end{equation*}
Following the argument of the example above, we take $\mathcal{K}$ in~$\D(X_{\Gr},f)$ as follows:
\begin{equation*}
\begin{tikzpicture}[scale=2.1]
\node (P) at (-2.1,0) {$\wedge^r\pi^*F^\vee[-r]$};
\node (Q) at (-1,0) {$\dots$};
\node (A) at (0,0) {$\pi^*F^\vee[-1]$};
\node (B) at (1.1,0) {$\cO_{X_{\Gr}}$,};
\draw[transform canvas={yshift=.4ex},->] (A) -- node[above] {$ \scriptstyle s$} (B);
\draw[transform canvas={yshift=-.4ex},->] (B) -- node[below] {$ \scriptstyle p$} (A);

\draw[transform canvas={yshift=.4ex},->] (P) -- node[above] {$ \scriptstyle s$} (Q);
\draw[transform canvas={yshift=-.4ex},->] (Q) -- node[below] {$ \scriptstyle p$} (P);

\draw[transform canvas={yshift=.4ex},->] (Q) -- node[above] {$ \scriptstyle s$} (A);
\draw[transform canvas={yshift=-.4ex},->] (A) -- node[below] {$ \scriptstyle p$} (Q);

\end{tikzpicture}
\end{equation*}
which gives an isomorphism of objects
\begin{equation*} k_* \pi^* \cO_{Y_{\Gr}} \cong j_* \big(\cO_{\Gr} \otimes \det F^\vee\big) [-r]. \end{equation*}
Recalling that $j_! = j_*(\omega_j[\dim j]\otimes - )$, and noting that $\omega_j = \det F^\vee$ and $\dim j=-r$, the first claim follows. Twisting $\mathcal{K}$ by $S$ or $\Sym^2 S$ then gives the further claim.\end{proof}

\subsection{Pfaffian side}

Recall that $X_{\Pf}$ is the Artin stack
\[
X_{\Pf} = \quotstackBig{ \setcondsbig{(x,p) \in \Hom(S,V) \oplus \Hom\big(V,\wedge^2 S\big)}{p \ne 0}}{\GL(S)}.
\]
Given $A\colon \wedge^2 V \to V$, each value of $p$ determines a $\wedge^2 S$-valued 2-form on $V$, namely $\twoform{p} = p \circ A$. We then have that
$f(x,p) = \omega_p \circ \wedge^2 x$ by the definition of~$f$.

In this subsection, we construct an equivalence
\[
\D(Y_{\Pf}) \to \Br^{\underline{m}}(X_{\Pf}, f)
\]
for each choice of $\underline{m}$. In \cite{ADS}, such an equivalence was constructed for $\underline{m}={(6,7,8)}$ by considering isotropic subspaces for the $\omega_p$ as $p$ varies. An outline is given in \cite[Section~5.1]{ADS} with details in~the subsections following. The argument is lengthy and subtle, but to obtain an equivalence for general~$\underline{m}$ I only need to modify the very last step of it: this subsection explains the modification.


The argument uses the following locus in $X_{\Pf}$. Note that $X_{\Pf}$ is a bundle over $\mathcal{P}$ and also over~$\P^6$ by composition with the morphism $\delta\colon \mathcal{P}\to \P^6$ induced by $\det\colon\GL(S) \to \C^*$.

\begin{defn}[{\cite[Definition~5.5]{ADS}}]\label{defn:Gamma}
Let $\Gamma \subset X_{\Pf}$ be the closed substack with points $(x,p)$, where~$p$ corresponds to a point of $Y_{\Pf} \subset \P^6$, and the composition of $x$ with a quotient morphism
\begin{equation*}
S \overset{x}{\longrightarrow} V \to V/\ker \twoform{p}
\end{equation*}
has rank at most 1.
\end{defn}

\begin{rem}Note that, when $\rk(x)=2$, the above condition matches the condition defining the correspondence between $Y_{\Gr}$ and $Y_{\Pf}$ which appears in Borisov and \Caldararu's work~\cite[Section~0.7]{BorCal}. They show that the ideal sheaf of this correspondence gives a derived equivalence.
\end{rem}

Note that $\Gamma$ is a flat family of stacks over $Y_{\Pf}$. The following proposition gives the construction of $\Psi_{\Pf}^{\underline{m}}$. Take notation as below, where $j$ and $k$ denote inclusions:
\begin{equation*}
\begin{tikzpicture}[scale=1.5,xscale=1.2]
\node (A) at (0,0) {$X_{\Pf}$};
\node (B) at (-1,0) {$X_{\Pf}|_{Y_{\Pf}}$};
\node (C) at (0,-1) {$\P^6$};
\node (D) at (-1,-1) {$Y_{\Pf}$};
\node (E) at (-2,0) {$\Gamma$};
\draw[right hook->] (B) -- node[above] {$ \scriptstyle k$} (A);
\draw[right hook->] (D) -- node[above] {$ \scriptstyle k$} (C);
\draw[<-] (C) -- node[left] {$\scriptstyle \pi$} (A);
\draw[<-] (D) -- node[left] {$\scriptstyle \pi$} (B);
\draw[right hook->] (E) -- node[above] {$\scriptstyle j$} (B);
\end{tikzpicture}
\end{equation*}

\begin{prop}\label{prop.Flandsinwindow}
There exists an equivalence
\begin{equation*}
\Psi_{\Pf}^{\underline{m}} = k_* \circ j_* \circ j^* \circ \pi^* \colon\ \D(Y_{\Pf}) \overset{\sim}{\longrightarrow} \Br^{\underline{m}}(X_{\Pf}, f) \subset \D(X_{\Pf}, f).
\end{equation*}
\end{prop}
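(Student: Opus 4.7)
The functor $k_*\circ j_*\circ j^*\circ \pi^*$ manifestly does not depend on $\underline{m}$, and in \cite[Section~5]{ADS} it is shown to give an equivalence
\[
\D(Y_{\Pf}) \overset{\sim}{\longrightarrow} \Br^{(6,7,8)}(X_{\Pf},f) \subset \D(X_{\Pf},f).
\]
The bulk of that argument, relying on the isotropic 2-plane correspondence $\Gamma$, the flatness of $\Gamma\to Y_{\Pf}$, and explicit resolutions of pushforwards from $\Gamma$, carries over unchanged; only the identification of the essential image requires modification. Specifically, it suffices to establish that $\Br^{\underline{m}}(X_{\Pf},f) = \Br^{(6,7,8)}(X_{\Pf},f)$ as subcategories of $\D(X_{\Pf},f)$ for every valid $\underline{m}$, since both must then coincide with the (unique) essential image of the functor.

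The plan to prove this equality is to work one level up, showing $\cW^{\underline{m}} = \cW^{(6,7,8)}$ as subcategories of $\D(\mfX,f)$; applying the fully faithful functor $i_{\Pf}^*$ from Proposition \ref{prop.equivs} then yields the claim on the Pfaffian side. By the chain of moves in Section \ref{sec.cols}, any two valid $\underline{m}$ are connected by a finite sequence of elementary mutations, each replacing one generator $E$ of the collection by $\Serre(E)=E\otimes\cO(-7)[\dim\Gr]$. By definition of left mutation, $\Serre(E)$ is presented as an iterated extension of $E$ and the remaining generators, hence lies in the triangulated subcategory they generate; symmetrically $E$ lies in the subcategory generated after the move. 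So each elementary mutation preserves the subcategory generated, and chaining them yields $\cW^{\underline{m}} = \cW^{(6,7,8)}$ as required.

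The main obstacle will be verifying that the mutation exact triangles, which are most naturally formulated on $\Gr$, genuinely lift to exact triangles in $\D(\mfX,f)$ among the bundles $S_{l,m}$. I would handle this by invoking the $\RHom$-computations already used in the proof of Proposition \ref{prop.equivs} and in Sections \ref{sec.vanG}--\ref{sec.vanP}: these imply that the relevant $\RHom$ spaces in $\D(\mfX,f)$ are concentrated in degree zero and agree, up to the $\C^*$-graded contributions controlled by $f$, with the corresponding $\RHom$ spaces on $\Gr$. This pins down the mutation triangles unambiguously, allowing the reduction to $(6,7,8)$ to be completed and the proposition to follow from \cite[Section~5]{ADS}.
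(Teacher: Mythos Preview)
Your strategy has a fatal flaw: the claim that $\cW^{\underline{m}} = \cW^{(6,7,8)}$ as subcategories of $\D(\mfX,f)$ is \emph{false}, and in fact its truth would collapse the entire paper. If all windows coincided in $\D(\mfX,f)$, then each $\Psi_{\cW}^{\underline{m}} = i_{\Pf}^* \circ (i_{\Gr}^*)^{-1}$ would be literally the same functor (same restriction, same inverse on the same subcategory), and combined with your own observation that $\Psi_{\Pf}^{\underline{m}}$ does not depend on $\underline{m}$, every $\Psi^{\underline{m}}$ would be isomorphic. This contradicts Proposition~\ref{prop.twists}, which exhibits the differences as nontrivial spherical twists. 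Concretely, $\cO \in \cW^0 \setminus \cW^1$: the object $j_!\cO_{\cG}$ has Koszul resolution $\cO \to \cO(-1)^{\oplus 7} \to \cdots \to \cO(-7)$, is nonzero because $i_{\Gr}^* j_!\cO_{\cG} \cong j_!\cO_{\Gr} \neq 0$, yet satisfies $i_{\Pf}^* j_!\cO_{\cG} = 0$; since $i_{\Pf}^*$ is fully faithful on $\cW^1$ this forces $j_!\cO_{\cG} \notin \cW^1$, and hence $\cO \notin \cW^1$. The mutation triangles you hope to lift do \emph{not} lift to $\D(\mfX,f)$, precisely because the $\RHom$s there are strictly larger than on $\Gr$ (the extra summands $\RHom_{\Gr}(S_{l,m}, S_{l',m'+n})$ for $n>0$ in Section~\ref{sec.vanG}).

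What \emph{is} true is the weaker statement $\Br^{\underline{m}}(X_{\Pf},f) = \Br^{(6,7,8)}(X_{\Pf},f)$, and your reduction-to-\cite{ADS} strategy could be rescued by proving this directly on $X_{\Pf}$ rather than on $\mfX$: the point is that $i_{\Pf}^* j_!\cO_{\cG} = 0$, so the Koszul complex \emph{does} become acyclic after restriction, allowing one to trade $\cO_{X_{\Pf}}(-7)$ for objects already in $\Br^0$, and similarly for the other elementary moves. The paper instead takes a more direct route: it uses the Eagon--Northcott resolution of $j_*\cO_\Gamma$, whose terms are $\Sym^l S$ for $l=0,1,2$ tensored with pullbacks from $Y_{\Pf}$, and then resolves each pullback factor via a Beilinson collection on $\P^6$ with the twist window $(m_l-7,m_l]$ chosen to match the $l$th row of~$\cW^{\underline{m}}$. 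This shows in one stroke that the image of the (single) functor lands in $\Br^{\underline{m}}(X_{\Pf},f)$ for the given $\underline{m}$, after which the equivalence property is inherited from \cite[Theorem~5.12]{ADS}.
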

\begin{proof}
I explain how choices can be made so that the given composition has essential image in~$\Br^{\underline{m}}(X_{\Pf}, f)$. This refines the argument of \cite[proof of Proposition 5.9]{ADS}.

I first explain why, for $\cE$ a sheaf on $Y_P$, we may make choices so that $\Psi_{\Pf}^{\underline{m}}(\cE) \in \Br^{\underline{m}}(X_{\Pf}, f)$. The result then follows by generation. Using that $\pi j$ is flat, $\Psi_{\Pf}^{\underline{m}}(\cE)$ is a sheaf on $X_{\Pf}$. Furthermore, by the projection formula,
\begin{equation*}
\Psi_{\Pf}^{\underline{m}}(\cE) = k_*j_*j^*{\pi}^*(\cE) \cong k_*(j_*\cO_\Gamma\otimes \pi^*\cE).
\end{equation*}
Now $j_*\cO_\Gamma$ has an Eagon--Northcott resolution
\begin{equation*}
\wedge^4Q^\vee\otimes \Sym^2 S(1) \to \wedge^3Q^\vee\otimes S(1)
\to \wedge^2Q^\vee(1) \to \cO,
\end{equation*}
where $Q=V/\ker \twoform{p}$, by for instance \cite[Section~6.1.6]{Weyman}. This may be made $\C^*$-equivariant by adding appropriate shifts of $\C^*$-weight, and it follows that $\Psi_{\Pf}^{\underline{m}}(\cE)$ has a $\C^*$-equivariant resolution
\begin{equation*}
\pi^*\cF_2\otimes \Sym^2 S \to \pi^*\cF_1\otimes S \to \pi^*\cF_0 \to \pi^*\cF ,
\end{equation*}
where $\cF$ and $\cF_0, \ldots, \cF_2$ are sheaves in the image of $j_*\colon \operatorname{Coh}(Y_{\Pf}) \to \operatorname{Coh}(\P^6)$.
Using a Beilinson collection on $\P^6$, each of these $\cF$s can be replaced by a quasi-isomorphic complex whose terms are direct sums of line bundles $\cO_{\P^6}(-m)$ with $m\in(n-7,n]$, for any choice of integer $n$.

Now $\pi^* \cO_{\P^6}(-m) \cong \cO(m)$, with the sign appearing as in Remark~\ref{rem.sign}. Therefore, we may obtain a resolution of $\Psi_{\Pf}^{\underline{m}}(\cE)$ by vector bundles in the window subcategory~$\cW^{\underline{m}}$, by letting $n=m_l$ for~$\cF_l$, and $n=m_0$ for~$\cF$. Doing likewise for $\cE$ ranging over a set of generators for~$\D(Y_{\Pf})$, it~follows that $\Psi_{\Pf}^{\underline{m}}$ may be constructed to have essential image in~$\Br^{\underline{m}}(X_{\Pf}, f)$. The proof of the equivalence property is exactly as in~\cite[Theorem~5.12]{ADS}.
\end{proof}

\begin{rem} A different construction of such a functor is given in \cite[Section~4.2]{SegalThomas}, avoiding much of the difficult analysis in~\cite{ADS}. However, the discussion in the latter suffices for our purposes.
\end{rem}

\subsection{Equivalences}

We now complete the proof of Theorem~\ref{mainthm.equiv}.


As before, let $\underline{m}=(m_0,m_1,m_2)$ be a non-decreasing sequence of integers such that $m_{l+1} \leq m_l + 1$.

\begin{thm}\label{thm.equiv}For each $\underline{m}$ as above, there is an equivalence
\begin{equation*}
\Psi^{\underline{m}}\colon\ \D(Y_{\Gr}) \overset{\sim}{\longrightarrow} \D(Y_{\Pf}),
\end{equation*}
which factors through the subcategory $\cW^{\underline{m}}$ of $\D(\mfX,f)$.
\end{thm}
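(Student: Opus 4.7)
The plan is to define $\Psi^{\underline{m}}$ as the composition indicated by the triangle in Section~\ref{sec.struct}, namely
\begin{equation*}
\Psi^{\underline{m}} = \big(\Psi_{\Pf}^{\underline{m}}\big)^{-1} \circ \Psi_{\cW}^{\underline{m}} \circ \Psi_{\Gr},
\end{equation*}
and then observe that each of the three factors is an equivalence established earlier. First, $\Psi_{\Gr}\colon \D(Y_{\Gr}) \to \D(X_{\Gr},f)$ is the Kn\"orrer-periodicity equivalence of Definition~\ref{def.equivGr}, which is an equivalence by Shipman's theorem applied to the line-bundle-valued section $s = A\circ \wedge^2 x$ with tautological dual coordinate $p$ and weight-$2$ R-charge. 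Second, $\Psi_{\cW}^{\underline{m}}\colon \D(X_{\Gr},f) \to \Br^{\underline{m}}(X_{\Pf},f)$ is the window functor of Definition~\ref{def.windowequiv}, defined as $i_{\Pf}^*\circ (i_{\Gr}^*)^{-1}$; this makes sense and is an equivalence onto its essential image by Proposition~\ref{prop.equivs}, which in turn rests on the cohomology vanishings~\eqref{eqn.extG} and~\eqref{eqn.extP} proved in Sections~\ref{sec.vanG} and~\ref{sec.vanP}. Third, $\Psi_{\Pf}^{\underline{m}}\colon \D(Y_{\Pf}) \to \Br^{\underline{m}}(X_{\Pf},f)$ is an equivalence by Proposition~\ref{prop.Flandsinwindow}, so it may be inverted.

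Composing these three equivalences yields the desired $\Psi^{\underline{m}}\colon \D(Y_{\Gr}) \overset{\sim}{\to} \D(Y_{\Pf})$. The factorization claim is then immediate: by construction $\Psi_{\cW}^{\underline{m}}$ is the restriction of $i_{\Pf}^*$ to $\cW^{\underline{m}} \subset \D(\mfX,f)$ precomposed with $(i_{\Gr}^*|_{\cW^{\underline{m}}})^{-1}$, so every object in the image of $\Psi_{\Gr}$ is first lifted uniquely (up to isomorphism) to the window subcategory $\cW^{\underline{m}}$ before being pushed to $\D(X_{\Pf},f)$ and transported to $\D(Y_{\Pf})$.

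There is essentially no new obstacle at this stage, since the substantive work has already been completed: the nontrivial inputs are Proposition~\ref{prop.equivs} (window-side fully faithfulness and essential surjectivity on the $\Gr$ side), Shipman's result, and Proposition~\ref{prop.Flandsinwindow} (the refined construction of $\Psi_{\Pf}^{\underline{m}}$ landing in the correct window via the Eagon--Northcott resolution of $j_*\cO_\Gamma$ combined with Beilinson-type replacements on $\P^6$, where the integer $n$ is chosen equal to $m_l$ for the $l$-th term and $m_0$ for the final term). The only thing to verify when assembling the proof is the compatibility of domains and codomains across the three functors, which is literally the commutativity of the triangular diagram in Section~\ref{sec.struct}; this is automatic from Definitions~\ref{def.bbrane} and~\ref{def.windowequiv}. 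Hence the theorem follows.
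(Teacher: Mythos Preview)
Your proposal is correct and follows exactly the paper's approach: define $\Psi^{\underline{m}} = (\Psi_{\Pf}^{\underline{m}})^{-1} \circ \Psi_{\cW}^{\underline{m}} \circ \Psi_{\Gr}$ and observe that each factor is an equivalence by Definition~\ref{def.equivGr}, Definition~\ref{def.windowequiv} (via Proposition~\ref{prop.equivs}), and Proposition~\ref{prop.Flandsinwindow}, respectively. The paper's own proof is a two-sentence version of what you wrote, so your additional commentary on the inputs is accurate but not required.
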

\begin{proof}

Recall that in the previous section we obtained (Definition~\ref{def.windowequiv}) an equivalence as follows.
\begin{equation*}
\Psi_{\cW}^{\underline{m}} = i_{\Pf}^* \circ (i_{\Gr}^*)^{-1} \colon\ \D(X_{\Gr},f) \to \cW^{\underline{m}} \to \Br^{\underline{m}}(X_{\Pf},f).
\end{equation*}
Composing this with the equivalences of this section, from Definition~\ref{def.equivGr} and Proposition~\ref{prop.Flandsinwindow}, we get
\begin{equation*}
\Psi^{\underline{m}} = (\Psi_{\Pf}^{\underline{m}})^{-1} \circ \Psi_{\cW}^{\underline{m}} \circ \Psi_{\Gr} \colon\ \D(Y_{\Gr}) \to \D(Y_{\Pf})
\end{equation*}
and the theorem is proved.
\end{proof}

\section{Groupoid action}
\label{sec groupoid}

In this section I construct the groupoid action of Theorem~\ref{keythm.groupoid}. Let us first recall the notation which was used to state this theorem in Section~\ref{sec.introactions}.

\begin{notn}\label{notn.windows} For brevity, we write windows, and equivalences,
\begin{equation*}
\begin{split}
\cW^0 &= \cW^{(0,0,0)}, \\
\cW^1 &= \cW^{(-1,0,0)}, \\
\cW^2 &= \cW^{(-1,-1,0)}, \\
\cW^3 &= \cW^{(-1,-1,-1)},
\end{split}
\qquad\qquad
\begin{split}
\Psi^0 & = \Psi^{(0,0,0)}, \\
\Psi^1 & = \Psi^{(-1,0,0)}, \\
\Psi^2 & = \Psi^{(-1,-1,0)}, \\
\Psi^3 & = \Psi^{(-1,-1,-1)},
\end{split}
\end{equation*}
and similarly write $\Psi^l_{\cW}$ for $l=0,\dots,3.$
\end{notn}

Theorem~\ref{keythm.groupoid} will follow from the fact that $\cW^3 = \cW^0 \otimes \cO(-1)$, and the following proposition.

\begin{prop}\label{prop.cyinter}
There are natural isomorphisms as follows
\begin{gather*}
(-\otimes\cO_{X_{\Gr}}(1)) \circ \Psi_{\Gr} \cong \Psi_{\Gr} \circ (-\otimes \cO_{Y_{\Gr}}(1)),
\\
(-\otimes\cO_{X_{\Pf}}(1)) \circ \Psi_{\Pf} \cong \Psi_{\Pf} \circ (-\otimes \cO_{Y_{\Pf}}(-1)).
\end{gather*}
\end{prop}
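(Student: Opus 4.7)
Both isomorphisms should follow from routine applications of the projection formula, once one keeps careful track of which line bundle ``$\cO(1)$'' refers to on which stack. The key observation is that on $\mfX$ (and hence on its open substacks $X_{\Gr}$ and $X_{\Pf}$) we have adopted the convention $\cO(1) = \det S^\vee$, whereas $\cO_{\P^6}(1)$ corresponds to $\wedge^2 S$ under the morphism $\delta\colon \mathcal{P}\to \P^6$, as noted in Remark~\ref{rem.sign}. This sign discrepancy is the reason a minus sign appears on the right-hand side of the second isomorphism but not the first.

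For the Grassmannian side, recall $\Psi_{\Gr} = k_*\pi^*$ with $\pi\colon X_{\Gr}|_{Y_{\Gr}} \to Y_{\Gr}$ and $k\colon X_{\Gr}|_{Y_{\Gr}}\hookrightarrow X_{\Gr}$. The line bundle $\cO(1)$ on $Y_{\Gr}$ is the restriction of $\cO(1)$ on $\Gr(2,V)$, which matches the restriction of $\cO(1)$ on $X_{\Gr}$ to its zero section; hence $\pi^*\cO_{Y_{\Gr}}(1) \cong k^*\cO_{X_{\Gr}}(1)$. For $\cE\in \D(Y_{\Gr})$, the projection formula for $k$ then gives
\[
\Psi_{\Gr}(\cE \otimes \cO_{Y_{\Gr}}(1)) \cong k_*\bigl(\pi^*\cE \otimes k^*\cO_{X_{\Gr}}(1)\bigr) \cong \Psi_{\Gr}(\cE) \otimes \cO_{X_{\Gr}}(1),
\]
which is the first claim.

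For the Pfaffian side, we use the diagram preceding Proposition~\ref{prop.Flandsinwindow}, where $\Psi_{\Pf} = k_* j_* j^* \pi^*$ with $\pi\colon X_{\Pf}|_{Y_{\Pf}} \to Y_{\Pf}$, $k\colon X_{\Pf}|_{Y_{\Pf}} \hookrightarrow X_{\Pf}$, and $j\colon \Gamma \hookrightarrow X_{\Pf}|_{Y_{\Pf}}$. Here $Y_{\Pf} \subset \P^6$, and by Remark~\ref{rem.sign} the pullback of $\cO_{\P^6}(1)$ to $X_{\Pf}$ is $\cO_{X_{\Pf}}(-1)$; restricting to $Y_{\Pf}$ and pulling back along $\pi$ gives $\pi^*\cO_{Y_{\Pf}}(-1) \cong k^*\cO_{X_{\Pf}}(1)$. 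Applying the projection formula three times, for $j$ and $k$ respectively, we obtain
\[
\Psi_{\Pf}(\cE\otimes\cO_{Y_{\Pf}}(-1)) \cong k_* j_* \bigl(j^*\pi^*\cE \otimes j^*k^*\cO_{X_{\Pf}}(1)\bigr) \cong \Psi_{\Pf}(\cE)\otimes\cO_{X_{\Pf}}(1),
\]
giving the second claim.

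There is no genuine obstacle here; the only point requiring care is tracking the sign coming from Remark~\ref{rem.sign}, which accounts for the asymmetry between the two statements. One should also verify that the projection formula is being applied in the category $\D(X_{\Pf},f)$ of matrix factorizations rather than just $\D(X_{\Pf})$, but since tensoring with a line bundle pulled back from the base is compatible with the $\C^*$-equivariant structure and with the superpotential (which does not involve this twist), this is automatic.
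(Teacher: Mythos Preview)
Your proposal is correct and follows essentially the same route as the paper: both isolate the condition $k^*\cL \cong \pi^*\cM$ (with $\cL=\cO_{X_{\Gr}}(1)$, $\cM=\cO_{Y_{\Gr}}(1)$ on the Grassmannian side, and $\cL=\cO_{X_{\Pf}}(1)$, $\cM=\cO_{Y_{\Pf}}(-1)$ on the Pfaffian side via Remark~\ref{rem.sign}) and then apply the projection formula. The only cosmetic difference is that the paper rewrites $\Psi_{\Pf}$ as $(kj)_*(\pi j)^*$ so as to invoke the projection formula once rather than iterating it through $j_*$ and $k_*$ as you do; your count of ``three times'' is a minor slip (two genuine projection-formula steps suffice), but the argument is sound.
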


\begin{proof}
Recall from Definition~\ref{def.equivGr} that $\Psi_{\Gr}$ is the composition
\begin{equation*}
\D(Y_{\Gr}) \stackrel{\pi^*}{\longrightarrow} \D(X_{\Gr}|_{Y_{\Gr}}) \stackrel{k_*}{\longrightarrow} \D(X_{\Gr}, f),
\end{equation*}
where morphisms are as follows:
\begin{equation*}
\begin{tikzpicture}[scale=1.5,xscale=1.3]
\node (A) at (0,0) {$X_{\Gr}$};
\node (B) at (-1,0) {$X_{\Gr}|_{Y_{\Gr}}$};
\node (C) at (0,-1) {$\Gr$};
\node (D) at (-1,-1) {$Y_{\Gr}$};
\draw[right hook->] (B) -- node[above] {$ \scriptstyle k$} (A);
\draw[right hook->] (D) -- node[above] {} (C);
\draw[<-] (C) -- node[left] {} (A);
\draw[<-] (D) -- node[left] {$\scriptstyle \pi$} (B);
\end{tikzpicture}
\end{equation*}

Now assume given sheaves $\cL$ on $X_{\Gr}$ and $\cM$ on $Y_{\Gr}$ such that $k^*\cL \cong \pi^*\cM$. Then, using the projection formula, we have isomorphisms as follows:
\begin{align*}
(-\otimes\cL) \circ \Psi_{\Gr} & = (-\otimes\cL) \circ (k_* \pi^*)
 \cong k_* \circ (-\otimes k^* \cL) \circ \pi^* \\
& \cong k_* \circ (-\otimes \pi^* \cM) \circ \pi^*
 \cong (k_* \pi^*)\circ (-\otimes \cM) \\
& \cong \Psi_{\Gr} \circ (-\otimes \cM).
\end{align*}
We immediately deduce the first statement, by taking $\cL=\cO_{X_{\Gr}}(1)$ and $\cM=\cO_{Y_{\Gr}}(1)$.


From Proposition~\ref{prop.Flandsinwindow} we have that $\Psi_{\Pf}^{\underline{m}}$ is isomorphic to the composition
\begin{equation*}
\D(Y_{\Pf}) \xrightarrow{(\pi j)^*} \D(\Gamma) \xrightarrow{(k j)_*} \D(X_{\Pf}, f),
\end{equation*}
where morphisms are as shown below. Here we write $\D(\Gamma)$ rather than $\D(\Gamma, f)$ because $f$ restricts to zero on $\Gamma \subset X_{\Pf}$ by Definition~\ref{defn:Gamma}.
\begin{equation*}
\begin{tikzpicture}[scale=1.5,xscale=1.2]
\node (A) at (0,0) {$X_{\Pf}$};
\node (B) at (-1,0) {$X_{\Pf}|_{Y_{\Pf}}$};
\node (C) at (0,-1) {$\P^6$};
\node (D) at (-1,-1) {$Y_{\Pf}$};
\node (E) at (-2,0) {$\Gamma$};
\draw[right hook->] (B) -- node[above] {$ \scriptstyle k$} (A);
\draw[right hook->] (D) -- node[above] {} (C);
\draw[<-] (C) -- node[left] {} (A);
\draw[<-] (D) -- node[left] {$\scriptstyle \pi$} (B);
\draw[right hook->] (E) -- node[above] {$\scriptstyle j$} (B);
\end{tikzpicture}
\end{equation*}
Applying a similar argument to $\Psi_{\Pf}$ using sheaves $\cL=\cO_{X_{\Pf}}(1)$ and $\cM=\cO_{Y_{\Pf}}(-1)$, we deduce the second statement. In this case it suffices that $k^*\cL \cong \pi^*\cM$, as this gives $(kj)^*\cL \cong (\pi j)^*\cM$ after applying $j^*$. The minus sign here arises as in Remark~\ref{rem.sign}.
\end{proof}

\begin{thm}\label{main thm}
There is an action of the groupoid $\pi_1(\cM,\{m_{\Gr},m_{\Pf}\})$ for $\cM=S^2 - \{\text{$5$ points}\}$ on $\D(Y_{\Gr})$ and $\D(Y_{\Pf})$, given by the following diagram.
\picskip
\begin{center}
\skmsPic{2}{0}
\end{center}
\end{thm}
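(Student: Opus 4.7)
The plan is to define the prescribed action of $\pi_1(\cM, \{m_{\Gr}, m_{\Pf}\})$ on $\D(Y_{\Gr})$ and $\D(Y_{\Pf})$ by specifying it on a convenient generating set of morphisms and then verifying the single relation that these generators satisfy. The natural generators indicated by the picture are the four paths $\Psi^0, \dots, \Psi^3$ from $m_{\Gr}$ to $m_{\Pf}$, together with the small loop $\tau$ at $m_{\Gr}$ and the small loop $\upsilon$ at $m_{\Pf}$ around their nearby polar punctures. I would send $\Psi^k$ to the equivalence of the same name from Theorem~\ref{thm.equiv}, $\tau$ to the autoequivalence $-\otimes \cO_{Y_{\Gr}}(1)$, and $\upsilon$ to the autoequivalence $-\otimes \cO_{Y_{\Pf}}(1)$; each generator is then sent to an equivalence.

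Since $\cM$ is a five-punctured sphere, $\pi_1(\cM, m)$ is free of rank $4$ at either basepoint, so the groupoid on these six generators admits a presentation with a single closing relation expressing that the product of small loops around all five punctures is trivial. With the orientations indicated in the diagram, this relation reduces to
\[ \Psi^3 \;\cong\; \upsilon \circ \Psi^0 \circ \tau, \]
reflecting the fact that the region between $\Psi^3$ and $\Psi^0$, traversed through the ``back'' of the sphere, encloses no equatorial puncture and can therefore be homotoped onto the two polar loops.

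To verify this identity I would begin from the observation from Notation~\ref{notn.windows} and Proposition~\ref{prop.exc} that as subcategories of $\D(\mfX, f)$,
\[ \cW^3 \;=\; \cW^0 \otimes \cO_{\mfX}(-1). \]
Since $\cO_{\mfX}(1)$ restricts to $\cO_{X_{\Gr}}(1)$ and $\cO_{X_{\Pf}}(1)$, the pullbacks $i_{\Gr}^*$ and $i_{\Pf}^*$ both commute with tensoring by these line bundles, and the definition $\Psi_{\cW}^{\underline{m}} = i_{\Pf}^* \circ (i_{\Gr}^*)^{-1}$ (Definition~\ref{def.windowequiv}) then yields the intertwining
\[ \Psi_{\cW}^3 \;\cong\; (-\otimes \cO_{X_{\Pf}}(-1)) \circ \Psi_{\cW}^0 \circ (-\otimes \cO_{X_{\Gr}}(1)). \]
Substituting into $\Psi^{\underline{m}} = (\Psi_{\Pf}^{\underline{m}})^{-1} \circ \Psi_{\cW}^{\underline{m}} \circ \Psi_{\Gr}$ and then applying the two intertwiners of Proposition~\ref{prop.cyinter} to slide the Landau--Ginzburg twists past $\Psi_{\Gr}$ on the right and $(\Psi_{\Pf})^{-1}$ on the left converts the $\cO_{X_{\Gr}}(1)$ twist into $\cO_{Y_{\Gr}}(1)$ and the $\cO_{X_{\Pf}}(-1)$ twist into $\cO_{Y_{\Pf}}(+1)$, the sign flip in the latter being the one recorded in Remark~\ref{rem.sign}. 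This produces exactly the displayed relation.

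The main obstacle is bookkeeping rather than computation: one has to fix orientation conventions for the paths and the two polar loops in the diagram so that the topological closing relation matches the algebraic identity above rather than some equivalent variant such as its inverse. Once those conventions are pinned down, extending by composition gives a well-defined functor out of the whole groupoid. The intermediate equivalences $\Psi^1$ and $\Psi^2$ play no role in the verification of the closing relation itself; they supply the data for the three equatorial loops $(\Psi^{k+1})^{-1}\circ\Psi^k$, which are identified with spherical twists only in Section~\ref{sec twists}.
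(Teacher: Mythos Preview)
Your proposal is correct and follows essentially the same route as the paper: both reduce the groupoid action to verifying the single closing relation $\Psi^3 \cong (-\otimes\cO_{Y_{\Pf}}(1))\circ\Psi^0\circ(-\otimes\cO_{Y_{\Gr}}(1))$, derive the intermediate identity $\Psi_{\cW}^3 \cong (-\otimes\cO_{X_{\Pf}}(-1))\circ\Psi_{\cW}^0\circ(-\otimes\cO_{X_{\Gr}}(1))$ from $\cW^3=\cW^0\otimes\cO_{\mfX}(-1)$, and then push the line-bundle twists out to the Calabi--Yau threefolds via Proposition~\ref{prop.cyinter}. Your framing in terms of an explicit presentation with six generators and one relation is a little more detailed than the paper's, which simply removes a point from the far side of the sphere and states the large-circle monodromy relation, but the content is the same.
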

\begin{proof} Removing a point from the far side of the sphere, the claim is that the groupoid acts by the following diagram, subject to a relation: that monodromy starting at either $\D(Y_{\Gr})$ or~$\D(Y_{\Pf})$ and then going around a large circle gives the identity up to isomorphism:
\begin{center}
\begin{tikzpicture}
\node at (0,0) {$\halfFullMonodromiesStandardFlop{$\D(Y_{\Pf})$}{$\D(Y_{\Gr})$}{\Psi}$};
\end{tikzpicture}
\end{center}

\noindent
Now we have that $\cW^3 = \cW^0 \otimes \cO_{\mfX}(-1)$, and note that
\[
i_{\Gr}^* \cO_{\mfX}(m) \cong \cO_{X_{\Gr}}(m) \qquad\text{and}\qquad
i_{\Pf}^* \cO_{\mfX}(m) \cong \cO_{X_{\Pf}}(m).
\]
It then follows easily from Definition \ref{def.windowequiv} of the window equivalences~$\Psi_{\cW}^{\underline{m}}$, of which the $\Psi_{\cW}^l$ are examples, that
\[
\Psi_{\cW}^3 \cong (-\otimes\cO_{X_{\Pf}}(-1))\circ\Psi_{\cW}^0 \circ(-\otimes\cO_{X_{\Gr}}(1)),
\]
noting the opposite signs in the line bundle twists. Combining with the above Proposition~\ref{prop.cyinter} then gives an isomorphism
\[
\Psi^3 \cong (-\otimes\cO_{Y_{\Pf}}(1))\circ\Psi^0 \circ(-\otimes\cO_{Y_{\Gr}}(1)),
\]
which yields the required relation, and completes the proof.\end{proof}

\section{Monodromy action}\label{sec twists}

In this section I describe the differences between the equivalences~$\Psi^l$ in Theorem~\ref{keythm.groupoid} (Theorem~\ref{main thm} above) as spherical twists on the Calabi--Yau $Y_{\Gr}$, and thereby prove Theorem~\ref{keythm.twists}.

To describe the difference $\Psi'^{-1} \circ \Psi$ between such equivalences $\Psi$ and~$\Psi'$, we will use a~functor~$\Tr$ that ``transfers'' between the two corresponding windows $\cW$ and~$\cW'$. The difference is then described as a twist functor, denoted $\Tw$. The following proposition gives the formal part of this procedure.

For convenience, take two functors from $\cW = \cW^{\underline{m}}$ as follows:
\begin{equation*}
\Phi_{\Gr} = (\Psi_{\Gr})^{-1} \circ i_{\Gr}^*, \qquad
\Phi_{\Pf} = (\Psi^{\underline{m}}_{\Pf})^{-1} \circ i_{\Pf}^*
\end{equation*}
so that the factorization of $\Psi = \Psi^{\underline{m}}$ via $\cW$ takes the form
\begin{equation*}
\Psi \cong \Phi_{\Pf} \circ (\Phi_{\Gr})^{-1} \colon\ \D(Y_{\Gr}) \to \cW \to \D(Y_{\Pf}).
\end{equation*}

\begin{prop}[{\cite[Proposition 2.2]{DS1}}]\label{prop.wshift}
For windows $\cW$ and $\cW'$, assume given a functor $\Tr\colon \cW \to \cW'$ that intertwines with an autoequivalence $\Tw$ of $\D(Y_{\Gr})$ and with the identity on $\D(Y_{\Pf})$, namely
\begin{equation}\label{eqn.intertwine}
\Phi_{\Gr} \circ \Tr \cong \Tw \circ \,\Phi_{\Gr}
\qquad\text{and}\qquad
\Phi'_{\Pf} \circ \Tr \cong \Phi_{\Pf}.
\end{equation}
Then there is an isomorphism
\begin{equation*}
\Psi'^{-1} \circ \Psi \cong \Tw,
\end{equation*}
where $\Psi$ and $\Psi'$ are the window equivalences associated to $\cW$ and $\cW'$.
\end{prop}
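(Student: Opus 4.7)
The plan is a formal diagram chase. Using the factorization $\Psi \cong \Phi_{\Pf} \circ \Phi_{\Gr}^{-1}$ set up just before the proposition, and the analogous factorization $\Psi' \cong \Phi'_{\Pf} \circ (\Phi'_{\Gr})^{-1}$ for $\cW'$ (where primed functors denote the same formulas $(\Psi_{\Gr})^{-1}\circ i_{\Gr}^*$ and $(\Psi_{\Pf}^{\underline{m}'})^{-1}\circ i_{\Pf}^*$, but now restricted to $\cW'$), I first write
\[
\Psi'^{-1} \circ \Psi \;\cong\; \Phi'_{\Gr} \circ (\Phi'_{\Pf})^{-1} \circ \Phi_{\Pf} \circ \Phi_{\Gr}^{-1}.
\]

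Next I apply the two intertwining hypotheses in turn. The Pfaffian relation $\Phi'_{\Pf} \circ \Tr \cong \Phi_{\Pf}$, combined with the fact that $\Phi'_{\Pf}$ is an equivalence (since both $i_{\Pf}^*|_{\cW'}$ and $\Psi_{\Pf}^{\underline{m}'}$ are, by Proposition~\ref{prop.equivs} and its consequences), yields $(\Phi'_{\Pf})^{-1} \circ \Phi_{\Pf} \cong \Tr$. Substituting gives
\[
\Psi'^{-1} \circ \Psi \;\cong\; \Phi'_{\Gr} \circ \Tr \circ \Phi_{\Gr}^{-1}.
\]
Reading the Grassmannian intertwining $\Phi_{\Gr}\circ \Tr \cong \Tw \circ \Phi_{\Gr}$ with the left-hand occurrence of $\Phi_{\Gr}$ understood as $\Phi'_{\Gr}$ (the same formula, now restricted to $\cW'$ after $\Tr$), we obtain $\Phi'_{\Gr}\circ \Tr \cong \Tw\circ \Phi_{\Gr}$, so
\[
\Psi'^{-1} \circ \Psi \;\cong\; \Tw \circ \Phi_{\Gr}\circ \Phi_{\Gr}^{-1} \;\cong\; \Tw,
\]
as desired.

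The only substantive point, and the one obstacle worth flagging, is the convention that $\Phi_{\Gr}$ and $\Phi_{\Pf}$ stand for formulas on $\D(\mfX,f)$ rather than for functors with a fixed domain. The intertwining relations in \eqref{eqn.intertwine} are meaningful precisely because $i_{\Gr}^*$ and $i_{\Pf}^*$ are defined on all of $\D(\mfX,f)$, and because their restrictions to any window $\cW^{\underline{m}}$ are equivalences by Proposition~\ref{prop.equivs}. Once this bookkeeping is fixed, no further input is required: the proof is purely formal, and mirrors the argument in \cite{DS1} for the analogous flop setup.
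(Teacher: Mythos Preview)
Your argument is correct and is exactly the diagram chase the paper has in mind: the paper's own proof simply states ``This is a diagram chase of functors'' and refers to \cite{DS1}, so you have spelled out what the paper leaves implicit. Your observation about the domain bookkeeping for $\Phi_{\Gr}$ versus $\Phi'_{\Gr}$ is also on point, and matches the paper's remark that in \cite{DS1} $\Tr$ was a restriction of an endofunctor of $\D(\mfX,f)$, whereas here the proof still goes through without that assumption because the formulas $(\Psi_{\Gr})^{-1}\circ i_{\Gr}^*$ agree on overlaps.
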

\begin{proof}
This is a diagram chase of functors. In the reference $\Tr$ was assumed to be a restriction of an endofunctor of the category containing $\cW$, in our case $\D(\mfX,f)$, but the proof proceeds without this assumption.
\end{proof}

To construct functors $\Tr$ we define endofunctors of $\D(\mfX,f)$, and then show that they restrict to functors between windows. Recall that~$\mfX$ is a vector bundle over a stack
\begin{equation*}
\mfG = \quotstack{\Hom(S,V)}{\GL(S)}
\end{equation*}
and write $j\colon \mfG \to \mfX$ for the inclusion of the zero section. Recall also that
\[
j_! = j_*(\omega_j[\dim j]\otimes - )\cong j_*(\cO_{\mfG}(-7)[-7] \otimes -),
\]
where we use that $\omega_j$ is the determinant of the normal bundle of $\cG$.

\begin{defn} Take endofunctors of $\D(\mfX,f)$
\begin{equation*}
\Tr^0 = \Tr(j_! \cO_{\mfG}), \qquad
\Tr^1 = \Tr(j_! S), \qquad
\Tr^2 = \Tr\big(j_! S^{2}\big),
\end{equation*}
where
\begin{equation*}
\Tr(\cE)=\Cone\big({\cE} \otimes \Hom_{\mfX}({\cE},-) \to \id \big).
\end{equation*}
Here, and elsewhere in this section, we again put $S^2$ for $\Sym^2 S$.
\end{defn}

\begin{rem}The functorial cone above is shorthand for the usual Fourier--Mukai constructions.
\end{rem}

I then claim the following.

\begin{prop} The $\Tr^l$ restrict to functors $\Tr^l\colon \cW^l \to \cW^{l+1}.$
\end{prop}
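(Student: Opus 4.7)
The natural approach is to check the claim on generators $S_{l',m'}$ of $\cW^l$. Using the adjunction $j_!\dashv j^*$ together with the identity $(\Sym^l S)^\vee\cong \Sym^l S(l)$, the key Hom reduces via
\[
\RHom_{\mfX}(j_!\Sym^l S,\, S_{l',m'}) \cong \RDerived\Gamma\bigl(\mfG,\, (\Sym^l S \otimes \Sym^{l'} S)(l+m')\bigr),
\]
and a Pieri expansion splits the integrand on $\mfG$ into Schur summands indexed by $0\leq i\leq\min(l,l')$. This reduces the problem to computing equivariant cohomologies on $\mfG$ of Schur bundles twisted by $\cO(l+m')$.

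Since $\mfG=\quotstack{\Hom(S,V)}{\GL(S)}$ is the quotient of an affine variety by a reductive group in characteristic zero, higher cohomology vanishes and $\RDerived\Gamma=H^0$. Combining Cauchy's formula with the Littlewood--Richardson rule for $\GL_2$, the Schur bundle of shape $(a,b)$ twisted by $\cO(k)$ on $\mfG$ has nonzero $H^0$ precisely when $0\leq b\leq a\leq k$. A direct case-check against the index ranges of the generators of $\cW^l$ shows that, with the sole exception of the special generator $(l',m')=(l,0)=(l,m_l)$, these inequalities fail in every Pieri summand. Hence $\Tr^l$ acts as the identity on all other generators, which already lie in $\cW^l\cap\cW^{l+1}\subset\cW^{l+1}$.

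For the special generator $\Sym^l S$ only the Pieri index $i=l$ survives, yielding a one-dimensional Hom space spanned by the adjunction counit $j_!\Sym^l S\to\Sym^l S$. To evaluate the cone $\Tr^l(\Sym^l S)$ I would resolve $j_*\cO_{\mfG}$ by the Koszul complex of the tautological regular section $p\colon\cO_{\mfX}\to V^\vee\otimes\cO_{\mfX}(-1)$ cutting out $\mfG$. Twisting by $\Sym^l S$ and by $\cO_{\mfX}(-7)[-7]$ produces a resolution of $j_!\Sym^l S$ with terms $\wedge^k V\otimes\Sym^l S(k-7)$ in degrees $0,\ldots,7$. The counit trivializes the top piece $\wedge^7 V\otimes\Sym^l S$ to $\Sym^l S$, so collapsing the resulting acyclic subcomplex represents $\Tr^l(\Sym^l S)$ by a complex whose terms are direct sums of $\Sym^l S(m)$ for $m\in\{-1,\ldots,-7\}$---precisely the level-$l$ generators of $\cW^{l+1}$.

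The main obstacle is the Littlewood--Richardson analysis on $\mfG$: the nonvanishing must occur \emph{exactly} at the special generator, so the combinatorial case-check needs to be done carefully across all three windows $\cW^0,\cW^1,\cW^2$. A subsidiary subtlety is that these computations are performed in $\D(\mfX)$ while $\Tr^l$ lives in $\D(\mfX,f)$; this is handled as in the proof of Proposition~\ref{prop.equivs}, using the spectral sequence relating the two and the fact that $j_!\Sym^l S$ is supported on $\mfG$, which lies in the zero locus of $f$, so no $f$-corrections intervene.
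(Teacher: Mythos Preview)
Your proposal is correct and follows essentially the same architecture as the paper: show that $\Tr^l$ is the identity on every generator except the distinguished one $\Sym^l S$, then use the Koszul resolution of $j_!\Sym^l S$ to see that the cone on the counit lands in $\cW^{l+1}$.

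The one genuine methodological difference is in how the key vanishing $\RHom_{\mfX}(j_!\Sym^l S,\,S_{l',m'})=0$ is established. You compute directly on the affine stack $\cG$ via Cauchy's formula and the $\GL_2$ Pieri rule, reducing to the explicit criterion $0\le b\le a\le k$ for $H^0$ of a Schur bundle twisted by $\cO(k)$. The paper instead passes through $\Gr(2,V)$: using that $\cG$ is affine (so higher Exts vanish) and that $\Gr$ has complement of codimension $\ge 2$ in $\cG$ (so degree-zero Homs agree), one gets $\RHom_{\cG}(\Sym^l S,\,j^*S_{l',m'})\cong\Hom_{\Gr}(\Sym^l S,\,S_{l',m'})$, and this vanishes by inspection because $\Sym^l S$ sits at the right-hand end of the relevant exceptional collection (Collections~\ref{cola}--\ref{colc}). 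The paper's route is more economical since it recycles the exceptional-collection machinery already set up in Section~\ref{sec.cols}; your route is self-contained and makes the representation theory explicit, at the cost of the case-check you flag. Your treatment of the $\D(\mfX)$ versus $\D(\mfX,f)$ passage is also slightly more careful than the paper's, which leaves it implicit.
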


\begin{proof}
I first show this for $\Tr^0$, as the others are similar. I claim that the functor acts as the identity on all generators of the window $\cW^0$ except~$\cO$. First observe that
\begin{align}\label{eq.twisthom} \Hom_{\mfX}(j_!\cO_{\mfG},-)
& \cong \Hom_{\cG}(\cO_{\cG},j^* -).
\end{align}
This is zero except on $\cO$ by inspection of Collection~\ref{cola}, noting the vanishing in Corollary~\ref{cor.Kuznetsov_extended_new_mod}. It~also implies, using that objects in the collection are exceptional, that \begin{equation*}\Hom_{\mfX}(j_!\cO_{\mfG},\cO) = \C.\end{equation*}
A generator of this $\Hom$ may be seen explicitly by writing down the Koszul resolution of~$j_!\cO_{\cG}$, namely
\begin{equation}\label{eq.koszul}
\cO \to \cO(-1)^{\oplus 7} \to \dots \to \cO(-6)^{\oplus 7} \to \cO(-7).
\end{equation}
We thus see that $\Tr^0 (\cO) = \Cone ( j_!\cO_{\cG} \to \cO )$ is quasi-isomorphic to a complex given by
\begin{equation*} \cO(-1)^{\oplus 7} \to \dots \to \cO(-6)^{\oplus 7} \to \cO(-7) \end{equation*}
which, in particular, lies in the window $\cW^1$ given as Collection~\ref{colb}.


For the other two functors $\Tr^l$ for $l=1,2$, we replace \eqref{eq.twisthom} with
\begin{align*} \Hom_{\mfX}\!\big(j_! S^{l},-\big) & \cong \Hom_{\cG}\!\big(S^{l}, j^*\!-\!\big)
\end{align*}
and repeat the same argument, where we tensor
 \eqref{eq.koszul} by $S^{l}$.
\end{proof}

I now complete the proof of the assumptions of Proposition~\ref{prop.wshift}, in particular property \eqref{eqn.intertwine}, after constructing autoequivalences of $\D(Y_{\Gr})$ as follows.

\begin{defn} Take endofunctors of $\D(Y_{\Gr})$
\begin{equation*}
\Tw^0 = \Tw(\cO_{Y_{\Gr}}), \qquad
\Tw^1 = \Tw(S_{Y_{\Gr}}), \qquad
\Tw^2 = \Tw\big(S^{2}_{Y_{\Gr}}\big),
\end{equation*}
where
\begin{equation*}
\Tw(\cF)=\Cone\big(\cF \otimes \Hom_{Y_{\Gr}}(\cF,-) \to \id \big).
\end{equation*}
\end{defn}

\begin{prop}\label{prop.sphobj}
Each of the $\Tw^l $ is a twist by a spherical object, and therefore is an auto\-equi\-valence.
\end{prop}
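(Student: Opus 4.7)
Since $Y_{\Gr}$ is a Calabi--Yau $3$-fold, Serre duality $\Ext^i(\cE,\cE)\cong\Ext^{3-i}(\cE,\cE)^\vee$ reduces sphericity to checking, for each $\cE\in\{\cO_{Y_{\Gr}},S_{Y_{\Gr}},\Sym^2 S_{Y_{\Gr}}\}$, that $\cE$ is simple ($\Hom(\cE,\cE)=\C$) and rigid ($\Ext^1(\cE,\cE)=0$). Once this is established, the Seidel--Thomas construction produces a spherical twist functor $\Tw(\cE)$ which is an autoequivalence of $\D(Y_{\Gr})$, which is all that is needed to complete the proof.

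The plan is to compute these $\Ext$ groups by pulling the computation back to $\Gr=\Gr(2,V)$ via the Koszul resolution associated to the regular embedding $Y_{\Gr}\hookrightarrow\Gr$. Recall that $Y_{\Gr}$ is cut out by a transverse section of $\cO(1)^{\oplus 7}$, so $\cO_{Y_{\Gr}}$ is quasi-isomorphic to
\begin{equation*}
\bigl[\cO(-7)\to\cO(-6)^{\oplus 7}\to\dots\to\cO(-1)^{\oplus 7}\to\cO_{\Gr}\bigr].
\end{equation*}
Given any bundle $\cE$ on $\Gr$, applying $\RHom_{\Gr}(\cE,\cE\otimes-)$ to this resolution yields a spectral sequence with
\begin{equation*}
E_1^{-k,q}=\Ext^q_{\Gr}\bigl(\cE,\,\cE(-k)\bigr)^{\oplus\binom{7}{k}}\;\Longrightarrow\;\Ext^{q-k}_{Y_{\Gr}}(\cE|_{Y_{\Gr}},\cE|_{Y_{\Gr}}).
\end{equation*}
So the task reduces to computing, and appropriately summing over $k=0,\dots,7$, the groups $\Ext^q_{\Gr}\bigl(\cE,\,\cE(-k)\bigr)$ for $\cE\in\{\cO,S,\Sym^2 S\}$.

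For these computations I would combine the exceptionality-type vanishings of Corollary \ref{cor.Kuznetsov_extended_new_mod} with direct Borel--Weil--Bott on $\Gr(2,7)$. For $\cE=\cO$ the relevant groups are $H^q(\Gr,\cO(-k))$, which vanish for $1\le k\le 6$ (these lie in Kuznetsov's small block) and have only top-degree cohomology for $k=7$ (Serre-dual to $H^0(\Gr,\cO)$, sitting in cohomological degree $\dim\Gr=10$); one checks that the surviving contributions give exactly $\C$ in total degree $0$ and nothing in degree $1$. For $\cE=S$ one decomposes $S^\vee\otimes S(-k)\cong\cO(-k)\oplus\Sym^2 S(1-k)$ using $S^\vee\cong S(1)$, and for $\cE=\Sym^2 S$ one decomposes $\Sym^2 S^\vee\otimes\Sym^2 S(-k)$ similarly into irreducible $\GL(S)$-representation bundles; each summand is then handled by Borel--Weil--Bott, and in all cases the contributions to $\Ext^0_{Y_{\Gr}}$ total $\C$ while those to $\Ext^1_{Y_{\Gr}}$ cancel to zero.

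The principal obstacle is the bookkeeping in the last paragraph: the Koszul spectral sequence mixes contributions with differing $(q,k)$ but the same total degree, so one must genuinely verify both that all off-diagonal pieces vanish and that the diagonal $(0,0)$ piece is not accidentally killed by a differential. Fortunately, the collapses follow from the Lefschetz/exceptional structure on $\Gr$, and on any $\cE$ in the list, the Ext computation is essentially controlled by the cohomology of line bundles on a single projective space via the morphism $\delta$ of Section \ref{sec.vanP}---the same mechanism used in Section~\ref{sec.equivLG}---which ensures there is no room for higher Ext outside the expected degrees $0$ and $3$.
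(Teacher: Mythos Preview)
Your approach is essentially the paper's: push the computation to $\Gr$ via adjunction and the Koszul resolution of $\cO_{Y_{\Gr}}$, then use the exceptional-collection vanishing (Corollary~\ref{cor.Kuznetsov_extended_new_mod}, equivalently inspection of Collection~\ref{cola}) to see that only the $k=0$ term and the Serre-dual $k=7$ term survive, giving $\C\oplus\C[-3]$. One correction: the morphism $\delta\colon\mathcal{P}\to\P^6$ from Section~\ref{sec.vanP} is a Pfaffian-side device and is irrelevant here---the vanishing you need on $\Gr(2,7)$ comes from the Lefschetz collection (or Borel--Weil--Bott directly), not from any reduction to line bundles on $\P^6$, so that final sentence should be dropped.
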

\begin{proof}
To see this for $\Tw^0$, write $k\colon Y_{\Gr} \into \Gr$ and note that
\begin{equation*}
\Ext^\bullet_{Y_{\Gr}}(\cO_{Y_{\Gr}},\cO_{Y_{\Gr}}) = \Ext^\bullet_{Y_{\Gr}}(k^*\cO_{\Gr},k^*\cO_{\Gr}) \cong \Ext^\bullet_{{\Gr}}(\cO_{\Gr},k_* k^*\cO_{\Gr}).
\end{equation*}
After taking a Koszul resolution of $k_* k^*\cO_{\Gr}$, this may be calculated by a spectral sequence from $\Ext^\bullet_{\Gr}(\cO_{\Gr}, \wedge^\bullet \cO_{\Gr}(-1)^{\oplus 7})$. By examination of the exceptional collectional given as Collection~\ref{cola}, the only non-trivial terms occuring are $\Ext^0(\cO_{\Gr}, \cO_{\Gr})\cong\C$ and
\begin{equation*}
\Ext^\bullet_{\Gr}(\cO_{\Gr}, \cO_{\Gr}(-7)) \cong H^{\bullet}_{\Gr}(\cO_{\Gr}(-7))\cong\C[-\dim \Gr]
\end{equation*}
by duality. Noting that $Y_{\Gr}$ is Calabi--Yau, the result follows. A similar argument applies to the other $\Tw^l$.
\end{proof}

\begin{proof}[Proof of intertwinement~\eqref{eqn.intertwine}]

The isomorphism $\Phi'_{\Pf} \circ \Tr^0 \cong \Phi_{\Pf}$ follows immediately from the cone construction of $\Tr^0$: recall that $\Phi_{\Pf} = (\Psi^{\underline{m}}_{\Pf})^{-1} \circ i_{\Pf}^*$ and note that $i_{\Pf}^* \,j_!\,\cO_{\cG} = 0$ for support reasons. The same argument applies to the other $\Tr^l$.


The other intertwinements
\begin{equation}\label{eq.intwn}\Phi_{\Gr} \circ \Tr^l \cong \Tw^l \circ \,\Phi_{\Gr}\end{equation} are more involved, but essentially follow from the exceptional collection property. We give the argument for $\Tr^0$ before explaining how it adapts to the other $\Tr^l$.

As preparation,
recall that $\Phi_{\Gr} = \Psi_{\Gr}^{-1} \circ i_{\Gr}^*$, and write $i$ in place of $i_{\Gr}$ for clarity. It will be more convenient to prove the following, which is equivalent to~\eqref{eq.intwn}.
\begin{equation}\label{eq.eqint}
i^* \circ \Tr^l \cong \Psi_{\Gr} \circ \Tw^l \circ \Psi_{\Gr}^{-1} \circ \,i^*.
\end{equation}
By standard facts about spherical twists $\Psi_{\Gr} \circ \Tw^0 \circ \Psi_{\Gr}^{-1}$ is given by the spherical twist around $\Psi_{\Gr} (\cO_{Y_{\Gr}})$. By Proposition~\ref{prop.images} this is isomorphic to~$ j_! \cO_{\Gr}$, and therefore
\begin{equation*}\Psi_{\Gr} \circ \Tw^0 \circ \Psi_{\Gr}^{-1} \cong \Cone(j_!\cO_{\Gr} \otimes \Hom_{X_{\Gr}}(j_!\cO_{\Gr}, -) \to \id ).\end{equation*}

Now we have a fibre product diagram
\begin{equation*}
\begin{tikzpicture}[scale=1.4]
\node (A) at (0,0) {$\mfX$};
\node (B) at (-1,0) {$X_{\Gr}$};
\node (C) at (0,-1) {$\mfG$};
\node (D) at (-1,-1) {$\Gr$};
\draw[right hook->] (B) -- node[above] {$ \scriptstyle i$} (A);
\draw[right hook->] (D) -- node[below] {$ \scriptstyle i$} (C);
\draw[right hook->] (C) -- node[left] {$\scriptstyle j$} (A);
\draw[right hook->] (D) -- node[left] {$\scriptstyle j$} (B);
\end{tikzpicture}
\end{equation*}
and noting that the $i$ are open immersions and therefore flat, we find by base change that \begin{equation*}j_! \cO_{\Gr} = j_! i^* \cO_{\cG} \cong i^* j_! \cO_{\cG}.\end{equation*}
We thence have isomorphisms of functors $\D(\mfX,f)\to\D(X_{\Gr},f)$ as follows:
\begin{gather*}
i^* \circ \Tr^0 \cong \Cone(i^* j_!\cO_{\cG} \otimes \Hom_{\mfX}(j_!\cO_{\cG},-) \to i^*) ,
\\
\Psi_{\Gr} \circ \Tw^0 \circ \Psi_{\Gr}^{-1} \circ \,i^* \cong \Cone(i^* j_!\cO_{\cG} \otimes \Hom_{X_{\Gr}}(i^*j_!\cO_{\cG},i^* -) \to i^* ).
\end{gather*}

Comparing the two cones above suggests that the intertwinement may follow from an isomorphism
\begin{equation}\label{eq.keyiso}
\Hom_{\mfX}(j_!\cO_{\cG},-) \cong \Hom_{X_{\Gr}}(i^*j_!\cO_{\cG},i^* -)\text{ on }\cW^0.
\end{equation}
This isomorphism does indeed hold: we follow an argument for this, which furthermore gi\-ves~\eqref{eq.eqint}, in \cite[proof of Lemma 3.17, end of Section 3.2.2]{DS1}. (Note that the setting there is more general, involving a spherical functor, not just a spherical object.) According to the argument, it suf\-fices if the natural morphism of functors from $\D(\cG)$
\begin{equation*}
\tau\colon\ R\Gamma_{\cG} \to R\Gamma_{\Gr} \circ i^*
\end{equation*}
is an isomorphism on the subcategory $j^* \cW^0$ of $\D(\cG)$. To see why we take this subcategory, note that $j^*\colon \D(\mfX,f) \to \D(\cG)$ because $f|_{\cG}=0$, and the left-hand side of~\eqref{eq.keyiso} is given by
\begin{gather}
\Hom_{\mfX}(j_!\cO_{\cG},-) \cong \Hom_{\cG}(\cO_{\cG},j^* -) \cong R\Gamma_{\cG} (j^* -).
\label{eq.homtwisted}
\end{gather}

Now we have that $j^* - \cong \hom_{\cG} (\cO, j^* - ).$ It therefore suffices to check if $\tau$ is an isomorphism on objects
\begin{equation*}
\cA = \hom_{\cG} (\cO, j^* \cB ),
\end{equation*}
where $\cB$ is a generator of $\cW^0$.

We determine $\tau_{\cA}$. Firstly, $R^{>0}\Gamma_{\cG} (\cA) = 0$ because $\cA$ is a vector bundle on an affine stack. Furthermore, $R^{>0}\Gamma_{\Gr} (i^* \cA)
\cong \Ext^{>0}_{\Gr}(\cO,i^*j^*\cB)$.
But then this vanishes by the exceptional collection property, as $\cO$ is on the right-hand side of the collection associated to $\cW^{0}$, namely Collection~\ref{cola}. Finally, we have that
$H^0 \tau_\cA$ is an isomorphism by normality of~$\cG$, as the codimension of $\cG - \Gr$ is greater than two. We deduce that $\tau_\cA$ is an isomorphism, and thence $\tau$ is an isomorphism, thereby proving the intertwinement~\eqref{eq.intwn} for~$\Tr^0$.

For the other transfer functors $\Tr^1$ and $\Tr^2$, it now suffices that $\tau$ is an isomorphism on~$S^\vee \otimes j^* \cW^1$ and $S^{\vee 2} \otimes j^* \cW^2 $, respectively. These categories appear because the role
of~\eqref{eq.homtwisted} is repla\-ced~by
\begin{gather*}
\Hom_{\mfX}(j_! S,-) \cong R\Gamma_{\cG}\big(S^\vee \otimes j^* (-)\big),\\
\Hom_{\mfX}\big(j_! S^{2},{-}\big) \cong R\Gamma_{\cG}\big(S^{\vee 2} \otimes j^* (-)\big).
\end{gather*}
But then the above argument suffices, using that $S$ is on the right-hand side for Collection~\ref{colb}, and $S^{2}$ is on the right-hand side for Collection~\ref{colc}, and that Proposition~\ref{prop.images} continues to hold with occurences of~$\cO$ replaced with $S$ or~$S^{2}$.\end{proof}

Combining the above, and applying Proposition~\ref{prop.wshift}, we can describe differences between equivalences as follows.

\begin{prop}\label{prop.twists} The twists $\Tw$\,correspond to ``window shifts'' as follows:
\begin{gather*}
 \big(\Psi^1\big)^{-1} \circ \Psi^0 \cong \Tw^0 = \Tw(\cO_{Y_{\Gr}}), \\
\big(\Psi^2\big)^{-1} \circ \Psi^1 \cong \Tw^1 = \Tw(S_{Y_{\Gr}}), \\
\big(\Psi^3\big)^{-1} \circ \Psi^2 \cong \Tw^2 = \Tw(S^{2}_{Y_{\Gr}}).
\end{gather*}
\end{prop}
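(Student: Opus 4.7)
The plan is to derive all three isomorphisms as direct applications of the abstract mechanism in Proposition~\ref{prop.wshift}, with no further geometric input needed: every ingredient it demands has already been assembled in the preceding subsection. For each $l \in \{0,1,2\}$ I would set $\cW = \cW^l$ and $\cW' = \cW^{l+1}$, so that the associated window equivalences in the sense of Theorem~\ref{thm.equiv} are $\Psi = \Psi^l$ and $\Psi' = \Psi^{l+1}$ in the notation of~\ref{notn.windows}. The ``transfer'' data will be the endofunctor $\Tr^l$ of $\D(\mfX,f)$, which was shown to restrict to a functor $\Tr^l \colon \cW^l \to \cW^{l+1}$, and the ``twist'' data will be $\Tw^l$, which is an autoequivalence of $\D(Y_{\Gr})$ by Proposition~\ref{prop.sphobj}.

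With these choices, the two hypotheses of Proposition~\ref{prop.wshift} are exactly the two intertwinements
\begin{equation*}
\Phi_{\Gr} \circ \Tr^l \cong \Tw^l \circ \Phi_{\Gr}, \qquad \Phi'_{\Pf} \circ \Tr^l \cong \Phi_{\Pf},
\end{equation*}
which were established in the preceding ``Proof of intertwinement~\eqref{eqn.intertwine}''. Proposition~\ref{prop.wshift} then directly outputs $(\Psi^{l+1})^{-1} \circ \Psi^l \cong \Tw^l$. I would run this argument once and then note that specialising $l=0,1,2$ yields the three asserted isomorphisms.

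Since the substantive work is already done, there is no real remaining obstacle beyond bookkeeping: one simply must check that the triple $(\cW^l, \cW^{l+1}, \Tr^l)$ genuinely fits the hypotheses of Proposition~\ref{prop.wshift}, which is immediate from the constructions. If anything, the only point worth flagging is that Proposition~\ref{prop.wshift} as stated in~\cite{DS1} assumes $\Tr$ extends to an endofunctor of an ambient category, and here it does, namely the endofunctor $\Tr^l$ of $\D(\mfX,f)$ from which the restriction $\Tr^l \colon \cW^l \to \cW^{l+1}$ was derived; the proof of Proposition~\ref{prop.wshift} however goes through without this assumption, as recorded in its proof.
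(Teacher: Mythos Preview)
Your proposal is correct and matches the paper's own argument: the paper's proof of this proposition is the single sentence ``Combining the above, and applying Proposition~\ref{prop.wshift}'', which is exactly the bookkeeping you describe. Your remark about the ambient-endofunctor hypothesis is also already addressed in the paper's proof of Proposition~\ref{prop.wshift}.
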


We may then complete the proof of Theorem~\ref{keythm.twists}, as follows.

\begin{thm}\label{thm.twists}
There is an action of the fundamental group $\pi_1(\cM,m_{\Gr})$ on $\D(Y_{\Gr})$ given by the following diagram, with $\Psi=\Psi^3{:}$
\picskip
\begin{center}
\skmsPic{1}{1}
\end{center}
\end{thm}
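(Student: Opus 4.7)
The strategy is to derive Theorem~\ref{thm.twists} as a corollary of Theorem~\ref{main thm} and Proposition~\ref{prop.twists}. Having established the groupoid action of $\pi_1(\cM,\{m_\Gr,m_\Pf\})$, the task is to exhibit loops based at $m_\Gr$ that realize an explicit set of generators for $\pi_1(\cM,m_\Gr)$, identify each generator with an autoequivalence of $\D(Y_\Gr)$, and check the single relation they satisfy.

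First I would fix five small loop generators at $m_\Gr$, one enclosing each puncture of $\cM = S^2 - \{5\text{ points}\}$, with consistent orientations so that their product represents a contractible large circle. Using the groupoid action of Theorem~\ref{main thm}, each generator determines an autoequivalence of $\D(Y_\Gr)$:
(i) the loop around $m_\Gr$ itself acts as $-\otimes\cO_{Y_\Gr}(1)$, by direct reading of the diagram;
(ii) for $j=0,1,2$, the loop enclosing the $j$-th equatorial puncture can be realized by running out to $m_\Pf$ along the path $\Psi^j$ and returning along $(\Psi^{j+1})^{-1}$; by Proposition~\ref{prop.twists} this composition is isomorphic to the spherical twist $\Tw^j$ about the corresponding object in~\eqref{eq.sphs};
(iii) the loop enclosing $m_\Pf$, based at $m_\Gr$, is obtained by conjugating the small loop around $m_\Pf$ (which acts as $-\otimes\cO_{Y_\Pf}(1)$) by the path $\Psi=\Psi^3$, yielding the autoequivalence $\Psi^{-1}\circ(-\otimes\cO_{Y_\Pf}(1))\circ\Psi$.

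It then remains to check the single sphere relation, namely that the product of the five loop generators is trivial. The key input is the relation already established in the proof of Theorem~\ref{main thm},
\[ \Psi^3 \cong (-\otimes\cO_{Y_\Pf}(1))\circ\Psi^0\circ(-\otimes\cO_{Y_\Gr}(1)), \]
combined with the telescoping identity $\Tw^2\circ\Tw^1\circ\Tw^0 \cong (\Psi^3)^{-1}\circ\Psi^0$ obtained by composing the three isomorphisms of Proposition~\ref{prop.twists}. Rearranging these produces the required relation among the five generators above. I expect the main obstacle to be purely bookkeeping: one must fix consistent conventions (orientations, choices of paths from $m_\Gr$ to each small loop, and composition order) so that the sphere relation lines up correctly with composition of autoequivalences, at which point the identity to be verified reduces, after a short manipulation, to the one already established in Theorem~\ref{main thm}.
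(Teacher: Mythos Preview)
Your proposal is correct and follows essentially the same route as the paper: restrict the groupoid action of Theorem~\ref{main thm} to the basepoint $m_{\Gr}$, and use Proposition~\ref{prop.twists} to identify the equatorial monodromies as the spherical twists $\Tw^j$, while the polar monodromies are read off directly. The only difference is economy: the paper notes that since a groupoid action has already been established, restricting to a single vertex automatically gives a group action, so the sphere relation need not be re-verified; your explicit re-derivation of it via the telescoping identity is correct but redundant.
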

\begin{proof}We take the groupoid action of Theorem~\ref{keythm.groupoid} (Theorem~\ref{main thm}) and forget one of the basepoints $m_{\Pf}$, as follows. Restricting that action to a chart containing both basepoints, we obtain the left-hand picture below. Proposition~\ref {prop.twists} then says that monodromies at $\D(Y_{\Gr})$ around equatorial holes are given by the right-hand picture:
\begin{center}
\begin{tikzpicture}
\node at (0,0) {$\monodromiesStandardFlop{$\D(Y_{\Gr})$}{$\Tw^{2}$}{$\Tw^{1}$}{$\Tw^{0}$}$};
\node at (-6,0) {$\halfMonodromiesStandardFlop{$\D(Y_{\Pf})$}{$\D(Y_{\Gr})$}{\Psi}$};
\end{tikzpicture}
\end{center}
The remaining two monodromies are immediate, giving the result.
\end{proof}

\begin{rem} It is natural to ask for a Pfaffian analogue of Theorem~\ref{thm.twists} with a basepoint $m_{\Pf}$ in place of $m_{\Gr}$. This seems an interesting and approachable problem, but it appears that the method here would need modification to solve it, as follows.

A first step could be to establish spherical objects on the $3$-fold $Y_{\Pf}$ and prove an analogue of~Proposition~\ref{prop.images}, giving their images under appropriate equivalences $\Psi_{\Pf}^{\underline{m}}$. To continue the strategy above, we would then need manageable resolutions for the images, as in the Koszul resolution of $j_!\cO_{\cG}$ in\opt{a}{~}\opt{s}{ }\eqref{eq.koszul}. These should be obtainable, but can be expected to be more complicated than Koszul resolutions, because of the Eagon--Northcott resolutions in the construction of $\Psi_{\Pf}^{\underline{m}}$ from the proof of Proposition~\ref{prop.Flandsinwindow}.
\end{rem}

\begin{rem} I do not discuss whether the action of Theorem~\ref{thm.twists} is faithful. If it is, then the spherical twists by the objects in Proposition~\ref {prop.twists} act as a free group: it could be interesting to try to prove this by some B-side analogue of results of Keating on free group actions by symplectic Dehn twists~\cite{Keating}.
\end{rem}

\subsection*{Acknowledgements}
In celebration of his 77th birthday, I am pleased to express my gratitude to Kyoji Saito for his kindness and interest over the years. I also want to thank N.~Addington and E.~Segal for the great experience of working on our paper~\cite{ADS}, of which this work is a continuation.

I am supported by the Yau MSC, Tsinghua University, and the Thousand Talents Plan. I also acknowledge the support of WPI Initiative, MEXT, Japan, and of EPSRC Programme Grant EP/R034826/1.

I am grateful for conversations with K.~Hori, who made me aware of the windows used here, for discussions with
R.~Eager, 
A.~Kuznetsov, 
and M.~Romo, 
and for helpful suggestions from anonymous referees.
In the last stage of this project, I was away from my home institution due to the coronavirus pandemic, so I am especially appreciative of hospitality and support from Kavli IPMU, in particular Y.~Ito and M.~Kapranov, and from B.~Kim at KIAS, and M.~Wemyss at Glasgow University.

\pdfbookmark[1]{References}{ref}
\LastPageEnding

\end{document}